\pgfplotsset{compat=newest}
\numberwithin{equation}{section}
\theoremstyle{definition}
\newtheorem{theorem}{Theorem}[section]
\newtheorem{lemma}[theorem]{Lemma}
\newtheorem{proposition}[theorem]{Proposition}
\newtheorem{corollary}[theorem]{Corollary}
\newtheorem{definition}[theorem]{Definition}
\newtheorem{conjecture}[theorem]{Conjecture}
\newtheorem{question}{Question}
\newenvironment{example}
{\pushQED{\qed}\examplex}
{\popQED\endexamplex}
\theoremstyle{remark}
\newtheorem{remark}[theorem]{Remark}
\newcommand{\rc}{\mathrm{rc}}
\newcommand{\conv}{\operatorname{conv}}
\newcommand{\convD}{\conv_\mathcal{D}}
\newcommand{\convrc}{\conv_\rc}
\newcommand{\cenv}{\mathcal{C}_\rc}
\newcommand{\R}{\mathbb{R}}
\newcommand{\Mmax}{\mathsf{M}}
\newcommand{\dist}{\operatorname{dist}}
\newcommand{\Cm}{\mathcal{D}}
\renewcommand{\d}{\mathrm{d}}
\title{\vspace{-0.5cm}\textsc{\textbf{Semialgebraic rank-one convex hulls: \\ 2x2 triangular matrices and beyond}}}
\author{Chiara~Meroni and Bogdan~Rai\cb{t}\u{a}}
\date{}
\begin{document}
\maketitle

\begin{abstract}
    We prove that the rank-one convex hull of finitely many $2\times 2$ triangular matrices is a semialgebraic set, defined by linear and quadratic polynomials. We present explicit constructions for five-point configurations and offer evidence suggesting that a similar characterization does not hold in the more general setting of directional convexity.
\end{abstract}

\section{Introduction}
Quasiconvexity is  central in the study of partial differential equations (PDE), as it characterizes weak convergence in Sobolev spaces \cite{KP94}, a typical mode of convergence in both the analysis of PDE and their numerical implementations, which echoes physical measurements. However, quasiconvexity is not easily checked in general, for instance since it is known to be non-local \cite{Kr99}.
Indeed, nontrivial examples of quasiconvex functions are few and far between \cite{Sverak90:ExamplesrcFunct}. The emphasis of the present work is on the seemingly weaker notion of rank-one convexity \cite{Morrey52:QuasiConvexity}.
As the name suggests, this property of a real function on a space of matrices (resp. a set in a space of matrices) guarantees that the function (resp. the set) is convex when we restrict it to the directions given by rank-one matrices. 
\begin{definition}
A function $f:\R^{n\times m} \to \R$ is said to be \emph{rank-one convex} ($\rc$) if the restriction $t\mapsto f(A+tB)$ is a convex function for every $A\in\R^{n\times m}$ and for every rank-one matrix $B\in\R^{n\times m}$.
\end{definition}
While rank-one convexity is a weaker notion in general \cite{sverak}, under restrictive but reasonable assumptions, the two notions coincide \cite{Muller99:DiagMatrices,HKL19}. This motivates our study to better understand the structure of rank-one convex hulls.
\begin{definition}\label{def:rch}
The \emph{rank-one convex hull} of a compact set $K\subset\R^{n\times m}$ is the set
\begin{equation*}
    \convrc K = \{A\in \R^{n\times m} \colon f(A) \leq \max f(K)\, \text{for all rank-one convex } f \}.
\end{equation*}
\end{definition}
In the literature, $\convrc K$ is commonly denoted also by $K^\rc$. 
While quasiconvex hulls are not computable in general, the calculation of rank-one convex hulls is more feasible, a fact which has been exploited in the field of convex integration to construct  wild solutions of nonlinear PDE \cite{MS96,MulSve99:ConvexIntegration,KMS,Kirchheim03:RigidityGeometry,MulSve03:CounterexRegularity,KreZim06:GeomRankOneHulls,DLSz,BDLIS}. We propose studying the computation of rank-one convex hulls from algebraic and geometric perspectives, building on \cite{MatPle98:SeparateCH,Matousek01:DirectionalConvexity,FraMat09:DHullsPlane}. The following problem was stated by Bernd Sturmfels during discussions with the authors.
\begin{conjecture}\label{conj:main}
The rank-one convex hull of a finite set in $\R^{n\times m}$ is semialgebraic.
\end{conjecture}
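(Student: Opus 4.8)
The plan is to build $\convrc K$ from the bottom up by iterated rank-one averaging, and to show that the process terminates. Put $K^{(0)}=K$ and, for $i\ge 0$,
\[
K^{(i+1)}=\bigl\{\lambda A+(1-\lambda)B \;:\; A,B\in K^{(i)},\ \rank(A-B)\le 1,\ \lambda\in[0,1]\bigr\},
\]
so that the lamination convex hull is $K^{\mathrm{lc}}=\bigcup_{i\ge 0}K^{(i)}$. The first point is that each $K^{(i)}$ is semialgebraic: $K^{(0)}$ is finite, and if $K^{(i)}$ is semialgebraic then $K^{(i+1)}$ is the image under the polynomial map $(A,B,\lambda)\mapsto\lambda A+(1-\lambda)B$ of the semialgebraic set $\{(A,B,\lambda):A,B\in K^{(i)},\ \lambda\in[0,1],\ \text{every }2\times 2\text{ minor of }A-B\text{ vanishes}\}$, hence is semialgebraic by Tarski--Seidenberg. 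It is essential here that $\rank(A-B)\le 1$ is an \emph{algebraic} (determinantal) constraint; this is exactly the feature that an arbitrary cone of convexity directions lacks. For an outer bound, recall that the polyconvex hull $K^{\mathrm{pc}}$ --- the set of $A$ for which the vector $(A,\text{minors}(A))$, with the minors read as polynomials in the entries of $A$, lies in the polytope $\conv\{(B,\text{minors}(B)):B\in K\}$ --- is a conjunction of finitely many polynomial inequalities, hence semialgebraic, and $\convrc K\subseteq K^{\mathrm{pc}}$. So $\convrc K$ is trapped between a semialgebraic inner hull and a semialgebraic outer hull; the task is to pin it down exactly.

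The scheme then reduces to two assertions: \textbf{(i)} the ascending chain stabilizes, $K^{(N)}=K^{(N+1)}$ for some $N=N(K)$ depending only on $K$, so that $K^{\mathrm{lc}}=K^{(N)}$ is closed and semialgebraic; and \textbf{(ii)} $\convrc K=K^{\mathrm{lc}}$, i.e.\ no ``laminate gap'' separates what finitely many rank-one splittings produce from what all rank-one convex functions detect. For $2\times 2$ triangular matrices the geometry becomes manageable. Writing $A=\left(\begin{smallmatrix}a&b\\0&c\end{smallmatrix}\right)$, the space of upper-triangular matrices is $\R^3$ with coordinates $(a,b,c)$, and $\rank(A-A')\le 1$ is the single quadratic equation $(a-a')(c-c')=0$; hence the rank-one directions form the union of the two coordinate $2$-planes $\{a=0\}$ and $\{c=0\}$, and rank-one convexity is precisely separate convexity in $a$ and $c$, with $b$ a freely movable coordinate. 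One can then adapt the combinatorial-geometric analysis of separately convex hulls of finite planar sets of Matou\v{s}ek--Plech\'a\v{c} and Frank--Matou\v{s}ek to establish (i) with an explicit $N(K)$, and to show in this setting that the lamination hull is closed and equals $\convrc K$, giving (ii). The defining inequalities are then the linear ones carried by the convex-combination parameters together with the quadratic ones $(a-a')(c-c')=0$ recording rank-one connections, which is the promised ``linear and quadratic'' description.

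Granting (i) and (ii), semialgebraicity follows: every point of $K^{(N)}$ is the barycenter of a binary tree of rank-one splittings of depth at most $N$ with leaves in $K$, and for each of the finitely many tree shapes the resulting locus is the image of a product of rank-one segments under an affine map intersected with the relevant quadrics, hence semialgebraic; $K^{(N)}$ is their finite union. The main obstacle --- and the reason the conjecture stays open beyond special cases --- is assertion (i) in general: the lamination hull of a finite set need not stabilize in finitely many steps, need not be closed, and is not known to coincide with $\convrc K$, while an infinite nested union of semialgebraic sets is not automatically semialgebraic. A proof of the full conjecture therefore seems to require an \emph{intrinsic} finiteness statement for $\convrc K$ --- for instance a bound, depending only on $\#K$ and on $n,m$, on the order of the laminates whose barycenters exhaust $\convrc K$ --- rather than naive iteration. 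The evidence assembled later that the analogous statement fails for general directional convexity indicates that such a finiteness statement must genuinely use the low dimension and algebraicity of the rank-one variety, not merely the formal structure of iterated one-dimensional convexification.
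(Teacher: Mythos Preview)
The statement you are trying to prove is a \emph{conjecture}: the paper does not prove it in general, and your own last paragraph correctly concedes as much. So there is no paper proof to compare against for the full statement; what can be compared is your treatment of the triangular special case, where the paper does prove semialgebraicity (Theorem~\ref{thm:semialg_triangular_hull}).

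Your scheme has a genuine gap already at the level of $2\times 2$ diagonal matrices, hence also for triangular ones. Assertion~(ii), that $\convrc K=K^{\mathrm{lc}}$, is \emph{false} for the Tartar square (Example~\ref{ex:T4}): no two of the four points $K_1,\dots,K_4$ are rank-one connected, so $K^{(1)}=K^{(0)}=K$ and the lamination hull is just the four points, whereas $\convrc K$ contains the entire inner square $[-1,1]^2$ and the four legs. Thus the chain stabilises immediately at $N=0$ --- your assertion~(i) holds trivially --- but to the wrong set. The whole point of $T_N$ configurations is that they produce nontrivial rank-one convex hulls \emph{without} any rank-one connections in $K$, so no amount of iterated pairwise averaging starting from $K$ can recover them. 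Your claim that for triangular matrices one can ``show in this setting that the lamination hull is closed and equals $\convrc K$'' therefore cannot be right as stated; what is true (and what the paper's Question~\ref{q:T4hull} asks) is whether iterating the \emph{$T_4$ hull} rather than the lamination hull suffices, but that is a different and still open construction.

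The paper's proof of the triangular case does not attempt an inside-out lamination construction at all. It works outside-in: project to the planar grid of $\pi(K)$, observe that over each closed grid cell the hull is trapped between two quadric sheets coming from (degenerate) $T_4$ configurations of the four highest (resp.\ lowest) preimages of the cell's corners (Proposition~\ref{prop:T4}), and use Kirchheim's localisation theorem together with Lemma~\ref{lem:rch_in_hyperplane} to show that this patchwork of quadrics over finitely many grid cells is exactly $\convrc K$. The finiteness is inherited from the finiteness of the planar grid, not from any stabilisation of an iterative hull. That is what makes the argument go through and what your lamination approach lacks.
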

A subset of a real vector space is \emph{semialgebraic} if it can be described as a boolean combination (namely, unions and intersections) of finitely many polynomial inequalities.
The conjecture is true in the case of \emph{separate convexity}, i.e., rank-one convexity for diagonal $n\times n$ matrices \cite{MatPle98:SeparateCH,FraMat09:DHullsPlane}, where the hulls are finite union of hyperrectangles.
In the current article, we make a first nonlinear nontrivial contribution towards a proof of Conjecture \ref{conj:main}, in the case of triangular $2\times 2$ matrices. 
\begin{theorem}\label{thm:semialg_triangular_hull}
    The rank-one convex hull of a finite set of triangular $2\times 2$ matrices is a semialgebraic set described by finitely many linear and quadratic polynomials.
\end{theorem}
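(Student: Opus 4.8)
The plan is to sandwich the rank-one convex hull $\convrc K$ between an outer \emph{polyconvex} bound and an inner \emph{lamination} bound that turn out to coincide, and to read off the claimed linear--quadratic description from the planar geometry underneath. First I would fix coordinates, identifying the upper-triangular matrix with entries $x,y$ in the top row and $0,z$ in the bottom row with the point $(x,y,z)\in\R^3$. Such a matrix has rank at most one exactly when $xz=0$, so the rank-one cone is the union of the two hyperplanes $\{x=0\}$ and $\{z=0\}$; equivalently, $[P,Q]$ is a rank-one segment iff $P$ and $Q$ agree in their first coordinate or in their third. The algebraic fact I would lean on is that $\det=xz$ restricts to an \emph{affine} function along every rank-one line: if $b_1b_3=0$ then $t\mapsto(x+tb_1)(z+tb_3)$ is affine in $t$. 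Hence, for every affine $\ell\colon\R^3\to\R$ and every $\eps\in\{-1,1\}$, the function $M\mapsto\eps\det M+\ell(M)$ is affine on each rank-one line and therefore rank-one convex; these, together with affine functions, are the certificates I would use against Definition~\ref{def:rch}.

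Testing against affine functions gives $\convrc K\subseteq\conv K$, a polytope. Testing against $\eps\det+\ell$ and optimizing over $\ell$ by linear-programming duality gives the polyconvex bound
\[
\convrc K\ \subseteq\ K^{\mathrm{pc}}:=\bigl\{\,M\in\R^3:\ \det M=\textstyle\sum_i\lambda_i\det P_i\ \text{for some}\ \lambda\in\Delta_{N-1}\ \text{with}\ M=\textstyle\sum_i\lambda_iP_i\,\bigr\},
\]
where $K=\{P_1,\dots,P_N\}$ and $\Delta_{N-1}$ is the standard simplex. By Tarski--Seidenberg $K^{\mathrm{pc}}$ is semialgebraic, but projecting out $\lambda$ does not by itself control the degrees of the defining polynomials. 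To remedy this I would project along the $y$-axis, $\pi(x,y,z)=(x,z)$: if $f\colon\R^2\to\R$ is separately convex then $f\circ\pi$ is rank-one convex, so $\pi(\convrc K)$ lies in the separately convex hull of $\pi(K)$, which by \cite{MatPle98:SeparateCH,FraMat09:DHullsPlane} is a finite union of axis-parallel rectangles $R_1,\dots,R_m$. I would then study $\convrc K$ fibrewise over these rectangles: over the relative interior of each $R_k$ the fibre should be a closed interval $[\underline y_k,\overline y_k]$ whose endpoints are, respectively, a maximum and a minimum of finitely many bilinear functions $\alpha+\beta x+\gamma z+\delta xz$. Since such functions are quadratic, each $R_k$ is cut out by linear inequalities, and a bilinear function restricts to an affine function on every slice $\{x=c\}$ and $\{z=c\}$ (so its graph is automatically closed under rank-one segments), handling the lower-dimensional strata in the same way presents $\convrc K$ as a finite boolean combination of linear and quadratic conditions.

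For the matching inner bound $K^{\mathrm{pc}}\subseteq\convrc K$ I would produce explicit iterated laminates: run the terminating, explicit construction of the separately convex hull of $\pi(K)$ from \cite{MatPle98:SeparateCH} and lift each planar coordinate segment to an honest rank-one segment in $\R^3$ by prescribing its $y$-coordinate along the relevant bilinear sheet, bilinearity being exactly what makes this lift consistent from one step to the next. Bookkeeping of which sheets are active over which rectangles should identify the lamination hull $K^{\mathrm{lc}}$ with the fibrewise-bilinear set above, so that $K^{\mathrm{lc}}\subseteq\convrc K\subseteq K^{\mathrm{pc}}\subseteq K^{\mathrm{lc}}$ and all four sets coincide, which would prove the theorem.

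The hard part will be the inclusion $K^{\mathrm{pc}}\subseteq K^{\mathrm{lc}}$: that lamination already exhausts the polyconvex hull. In general lamination is strictly weaker than rank-one convexity, which is strictly weaker than polyconvexity, so this can only be forced by the rigid geometry at hand --- a rank-one cone that is a union of just two hyperplanes on which $\det$ is a null form. Concretely this means a careful analysis of how the rectangles $R_k$ abut along common edges and how the $y$-envelopes propagate across them, together with a systematic treatment of all degenerate configurations (rectangles collapsing to segments or points, envelopes that turn out to be affine, or $K$ contained in special subvarieties such as a single rank-one hyperplane) so that only finitely many bilinear sheets --- hence finitely many quadratic polynomials --- ever occur. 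I would expect the five-point configurations, in which a genuinely bilinear (non-affine) sheet is first forced, to be precisely the minimal cases that stress-test this argument.
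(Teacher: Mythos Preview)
Your sandwich $K^{\mathrm{lc}}\subseteq\convrc K\subseteq K^{\mathrm{pc}}$ is correct, but the closing inclusion $K^{\mathrm{pc}}\subseteq K^{\mathrm{lc}}$ that you flag as the hard part is simply false for triangular matrices. Take the Tartar square $K=\{(3,0,1),(1,0,-3),(-3,0,-1),(-1,0,3)\}$ in your coordinates---four diagonal, hence upper-triangular, $2\times2$ matrices with determinants $3,-3,3,-3$. The point $M=(0,0,\tfrac32)$ has $\det M=0$ and is realized by the convex combination $\lambda=(\tfrac{13}{40},\tfrac{1}{40},\tfrac{7}{40},\tfrac{19}{40})$ with $\sum_i\lambda_i\det P_i=0$, so $M\in K^{\mathrm{pc}}$; yet $\pi(M)=(0,\tfrac32)$ lies outside the separately convex hull of $\pi(K)$ (the square $[-1,1]^2$ with four legs), so $M\notin\convrc K$. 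Thus $K^{\mathrm{pc}}\supsetneq\convrc K$ already for four points, and the polyconvex outer bound can never collapse onto the lamination inner bound. The rigidity you invoke is not strong enough: the only rank-one affine functions on triangular matrices are linear forms and $\pm\det+\ell$, and these do not separate the Tartar hull from the region just above it. In particular $K^{\mathrm{pc}}$ does not even project into the separately convex hull of $\pi(K)$, so your fibrewise description of $K^{\mathrm{pc}}$ over the rectangles $R_k$ cannot hold either.

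The paper's proof never touches the polyconvex hull. Your observation that $y+\delta xz+\beta x+\gamma z+\alpha$ is rank-one affine is exactly Proposition~\ref{prop:T4}, and your intuition that the upper and lower boundaries of $\convrc K$ are bilinear sheets over each grid rectangle is the correct conclusion---but it is established by \emph{localization}, not by sandwiching. Over each closed $2$-cell $R$ of the planar grid one applies \cite[Theorem~4.7]{Kirchheim03:RigidityGeometry} to the box $B=R\times[-\Mmax,\Mmax]$ to get $\convrc K\cap B=\convrc\bigl((\convrc K)\cap\partial B\bigr)$; since no points of $K$ project into the open cell, the boundary data reduce to four vertical segments over the corners of $R$ (this reduction over grid \emph{edges} is the content of the separate Lemma~\ref{lem:rch_in_hyperplane}), and the upper endpoints of those segments form a degenerate $T_4$ whose hull lies on a single quadric by Proposition~\ref{prop:T4}. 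The ingredient your plan is missing is precisely this localization step together with the edge lemma that feeds it; without them the fibrewise bilinear description remains an unproved assertion, and the polyconvex shortcut you propose to replace it is blocked by the counterexample above.
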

Figure \ref{fig:T4triang} displays the nonlinear but semialgebraic rank-one convex hull of four triangular $2\times 2$ matrices.
\begin{figure}
    \centering
    \includegraphics[width=0.4\linewidth]{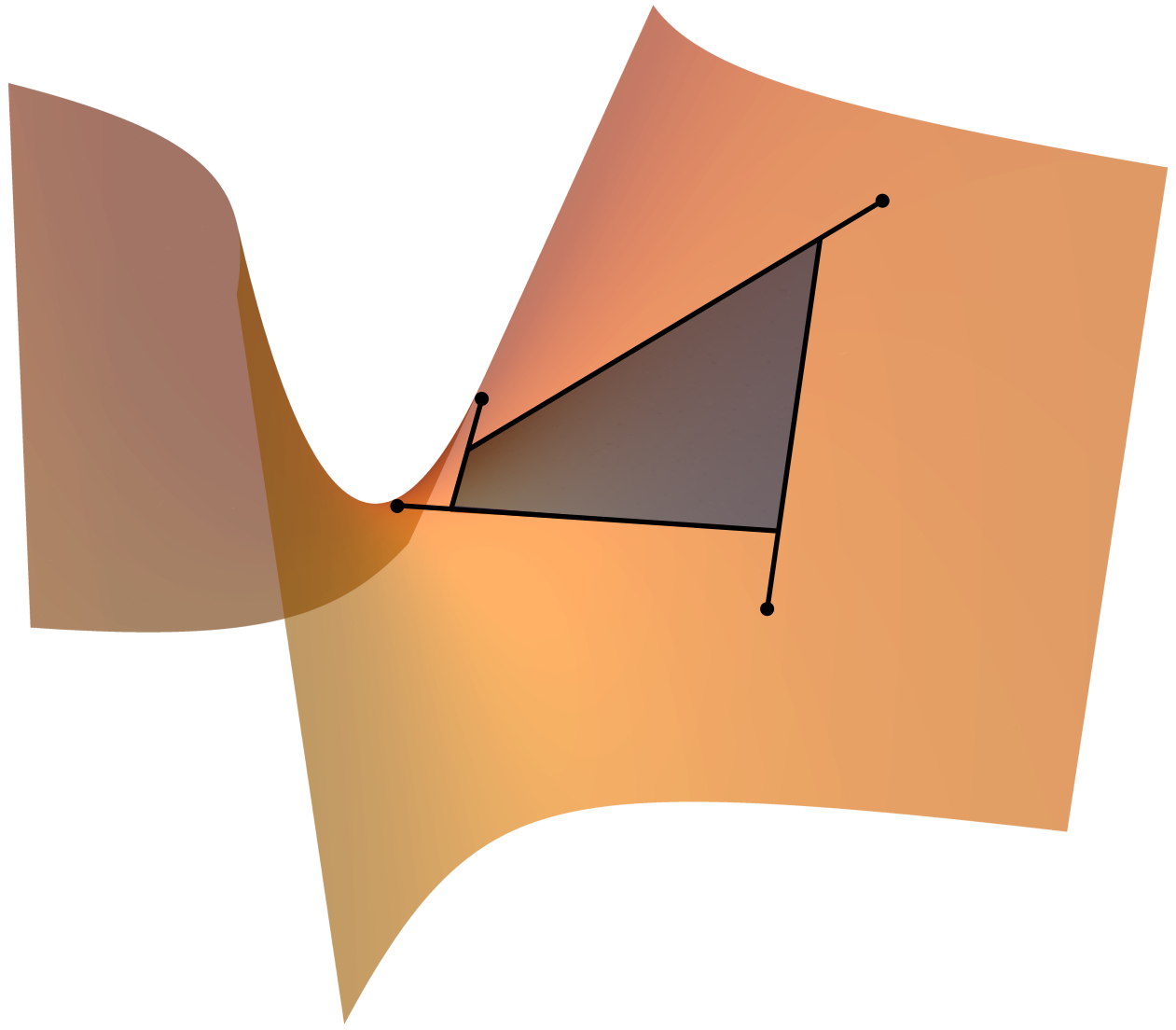}
    \caption{The rank-one convex hull (black-gray) of four triangular $2\times 2$ matrices (black dots), and the quadratic surface (orange) it lies on.}
    \label{fig:T4triang}
\end{figure}
We emphasize that, remarkably, the same result holds for quasiconvex hulls of finite sets too, as the two hulls coincide for triangular $2\times 2$ matrices \cite{HKL19}.
Theorem \ref{thm:semialg_triangular_hull} provides the first non-planar general family of nonlinear full-dimensional rank-one convex hulls. Indeed, the other nonlinear examples that are known are either $T_4$ configurations, whose hulls are lower-dimensional except in the plane (see Remark \ref{rmk:T4_all}), or the specific five-point configuration in \cite{Pompe10:5gradProblem}.

We expand on the theoretical relevance of Conjecture \ref{conj:main} and Theorem \ref{thm:semialg_triangular_hull}.
As noted above, contexts in which the quasiconvex hull is computable are quite rare and lead to the construction of interesting solutions to nonlinear PDEs. While the computation of rank-one convex hulls is considered more feasible, their characterization is still \emph{infinite}-dimensional, and it is, in fact, nontrivial to construct the rank-one convex hull of even a very simple-looking set. This is in contrast with the classical notion of convexity, where the convex hull of finitely many points is a polytope, therefore not only semialgebraic, but also describable using only linear polynomials. An affirmative answer to Conjecture \ref{conj:main} would provide a first nonlinear but \emph{finite} description of rank-one convex hulls.

\paragraph{Structure of the paper.}
Section \ref{sec:background} recalls some technical background and constructions related to rank-one convex sets, and puts this notion in the broader contest of calculus of variations in Section \ref{subsec:calculus_variations}. Then, in Section \ref{sec:plane}, we review the results for $2\times 2$ diagonal matrices, before moving to $2\times 2$ triangular matrices in Section \ref{sec:tri_matrices}.  Theorem \ref{thm:rch5pts} explicitly computes the rank-one convex hull of five triangular matrices, laying the ground for  the proof Theorem \ref{thm:semialg_triangular_hull}. Section \ref{sec:directional_convexity} is concerned with a generalization of rank-one convexity, known as directional convexity, where the rank-one convex cone is replaced by any other cone. In this context, we provide arguments that would suggest a negative answer to Conjecture \ref{conj:main} in such framework. We conclude in Section \ref{sec:open_questions} with more specific open questions, towards a solution of the conjecture.

\paragraph{Acknowledgments.} We are thankful to Bernd Sturmfels for suggesting the motivating conjecture behind this work, and for his support. We would also like to thank Andr\'e Guerra and Leonid Monin for fruitful conversations, and Vivian and Greg Kuperberg for pointing out reference \cite{KLLLT23:TilingOfRectangles}. We thank the reviewer for their detailed feedback that helped us improve the first version of this manuscript. CM thanks Georgetown University for the for generous hospitality during a research visit. CM is supported by Dr. Max R\"ossler, the Walter Haefner Foundation, and the ETH Z\"urich Foundation.

\section{Background}\label{sec:background}

Semiconvexity notions are a classic topic in the calculus of variations \cite{Mo66,Da}. The viewpoint we take in this section is inspired by \cite{Kirchheim03:RigidityGeometry}, where most proofs of the statements in this section can be found.
To start, since the cone of rank-one matrices spans the whole matrix space, a rank-one convex function is locally Lipschitz, and we can define a rank-one convexification of any other function.
\begin{definition}
Let $f:\R^{n\times m} \to \R$. Its \emph{rank-one convex envelope} is given by
\begin{equation*}
    \cenv f(A) \coloneqq \sup\{g(A)\colon g\leq f,\, g \hbox{ rank-one convex}\}.
\end{equation*}
\end{definition}
Of course, it may be possible that the envelope, as defined, equals $-\infty$ at certain points. However, either $\cenv f\equiv -\infty$, or $\cenv f>-\infty$ over the whole $\R^{n\times m}$ and $\cenv f$ is a rank-one convex function.
As defined, the rank-one convex envelope is not particularly easy to work with. We formulate a characterization in terms of a special class of discrete measures.
\begin{definition}
Given $A\in \R^{n\times m}$ an (rank-one) \emph{elementary splitting} of the Dirac measure $\delta_A$ is the measure $\lambda_1 \delta_{A_1} + \lambda_2 \delta_{A_2}$, with $\lambda_1 + \lambda_2 = 1$ and $A_2-A_1$ a rank-one matrix. We can iterate this process with an elementary splitting, for instance, of the measure $\delta_{A_1}$, and so on. We then call a (rank-one) \emph{laminate of finite order} a measure obtained by a finite number of (rank-one) elementary splittings. It is of the form
\begin{equation*}
    \nu = \sum_{i=1}^s\lambda_i \delta_{A_i}
\end{equation*}
with $\sum \lambda_i = 1$. We call its center of mass the point $\overline{\nu} = \sum \lambda_i A_i\in \R^{n\times m}$.
\end{definition}

\begin{remark}\label{rmk:crypto}
    Another related notion of hull is the \emph{lamination hull} of a set, which, in the rank-one convexity setting, is constructed by iteratively adding to the set~$K$ all line segments whose endpoints differ by a rank-one matrix. This iterative process, known as \emph{lamination}, yields an inner approximation of the rank-one convex hull of the set. Interestingly, a similar concept has recently been explored in cryptography, in the context of determining the existence of (and, if possible, constructing) secure protocols for two-party communication~\cite{BKMN22:SecureTwoParty}. In that setting, the lamination hull of a specific set provides bounds on the round complexity of the protocol. The authors further analyze the \emph{tameness} of this construction~\cite[Theorem~3]{BKMN22:SecureTwoParty}.
\end{remark}

The closure of the class of laminates of finite order is relevant to us and it can be characterized elegantly by duality with rank-one convex functions.
\begin{definition}
Let $\nu$ be a probability measure on $\R^{n\times m}$. We say that $\nu$ is a (rank-one) \emph{laminate} if the Jensen-type inequality
$$
\langle \nu , f \rangle\geq f(\bar \nu)
$$
holds for all rank-one convex functions $f$. Here $\bar\nu=\int x\, \d\nu(x)$ denotes the {barycenter} of $\nu$.
\end{definition}
Then, we can approximate any laminate $\nu$ by finite order laminates $\{\nu_i\}_i$ in the weak-$^*$ topology, namely 
$$
\langle \nu_j , f \rangle\rightarrow \langle \nu , f \rangle\quad\text{for all }f\in C_0(\R^{n\times m}),
$$
where $C_0(\R^{n\times m})$ denotes the space of continuous functions that vanish at infinity, endowed with the supremum norm (i.e., the uniform closure of the space of continuous functions with compact support).
We can characterize the rank-one convex envelope of a function in terms of laminates.
\begin{lemma}{\cite[Lemma 2.2]{MulSve03:CounterexRegularity}}
Let $f\colon\R^{n\times m}\rightarrow\R$ be a continuous function. Then 
\begin{equation*}
    \cenv f(A) = \inf \{\langle \nu , f \rangle\colon\nu \hbox{ laminate of finite order, } \overline{\nu} = A\}.
\end{equation*}
\end{lemma}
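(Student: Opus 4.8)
The plan is to prove the identity by establishing the two inequalities separately. Write $g(A)$ for the infimum on the right-hand side; it is well defined because $\delta_A$ is a (trivial) finite-order laminate with barycenter $A$, so the competition set is nonempty, and in particular $g(A)\le\langle\delta_A,f\rangle=f(A)$.

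\emph{The inequality $g\ge\cenv f$.} The key preliminary step is that every finite-order laminate obeys the Jensen-type inequality against rank-one convex functions. If $h$ is rank-one convex and $\nu=\lambda_1\delta_{A_1}+\lambda_2\delta_{A_2}$ is a single elementary splitting of $\delta_A$ (so $A=\lambda_1 A_1+\lambda_2 A_2$ and $A_2-A_1$ is a rank-one matrix), then $\langle\nu,h\rangle=\lambda_1 h(A_1)+\lambda_2 h(A_2)\ge h(A)$ by rank-one convexity of $h$; by induction on the number of elementary splittings producing a general finite-order laminate $\nu$, one gets $\langle\nu,h\rangle\ge h(\overline\nu)$. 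Consequently, for every rank-one convex $h\le f$ and every finite-order laminate $\nu$ with $\overline\nu=A$ we have $\langle\nu,f\rangle\ge\langle\nu,h\rangle\ge h(A)$; taking the supremum over all such $h$ gives $\langle\nu,f\rangle\ge\cenv f(A)$, and then the infimum over all such $\nu$ gives $g(A)\ge\cenv f(A)$.

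\emph{The inequality $g\le\cenv f$.} It suffices to show that $g$ is rank-one convex, for then $g$ is an admissible competitor in the supremum defining $\cenv f$ (we already know $g\le f$), so $g\le\cenv f$. Fix $X_1,X_2$ with $X_2-X_1$ a rank-one matrix and $\lambda\in[0,1]$, and set $X=\lambda X_1+(1-\lambda)X_2$; we must check $g(X)\le\lambda g(X_1)+(1-\lambda)g(X_2)$. Given $\eps>0$, pick finite-order laminates $\nu_1,\nu_2$ with $\overline{\nu_i}=X_i$ and $\langle\nu_i,f\rangle\le g(X_i)+\eps$ (with the obvious modification if some $g(X_i)=-\infty$). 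The crucial point is that $\nu\coloneqq\lambda\nu_1+(1-\lambda)\nu_2$ is again a finite-order laminate, with barycenter $X$: one produces it by first performing the elementary splitting of $\delta_X$ into $\lambda\delta_{X_1}+(1-\lambda)\delta_{X_2}$ --- legitimate precisely because $X_2-X_1$ has rank one --- and then applying to each $\delta_{X_i}$ the finite sequence of elementary splittings that produces $\nu_i$ out of $\delta_{X_i}$. Hence $g(X)\le\langle\nu,f\rangle=\lambda\langle\nu_1,f\rangle+(1-\lambda)\langle\nu_2,f\rangle\le\lambda g(X_1)+(1-\lambda)g(X_2)+\eps$, and letting $\eps\to0$ concludes. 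As for $\cenv f$ itself, one checks that $g$ is either identically $-\infty$ --- in which case it agrees with $\cenv f$ by the inequality of the previous paragraph --- or finite everywhere, in which case the convex-combination estimate just proved is exactly the convexity of $t\mapsto g(A+tB)$ along rank-one directions $B$, so $g$ is rank-one convex; either way the two inequalities combine to the asserted identity.

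I expect the only genuinely delicate point to be the splicing step above, namely that a convex combination of two finite-order laminates whose barycenters differ by a rank-one matrix is again a finite-order laminate, together with the routine bookkeeping around the $-\infty$ alternative; the rest is a direct unwinding of the definitions of the rank-one convex envelope and of finite-order laminates.
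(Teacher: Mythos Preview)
Your argument is correct and is the standard one: the inequality $g\ge\cenv f$ via Jensen for finite-order laminates, and $g\le\cenv f$ by showing $g$ is rank-one convex through the splicing of two laminates whose barycenters differ by a rank-one matrix. Note that the paper does not actually give its own proof of this lemma; it is quoted from \cite[Lemma~2.2]{MulSve03:CounterexRegularity}, and your write-up matches the argument there. The only place you are a bit quick is the $-\infty$ alternative: to conclude $g\equiv-\infty$ from $g(A_0)=-\infty$ you should spell out that for any rank-one $C$ and any $s\in\R$ one has $g(A_0+sC)\le\tfrac12 g(A_0)+\tfrac12 g(A_0+2sC)=-\infty$ (using $g\le f<\infty$), so $g=-\infty$ along every rank-one line through $A_0$, and then iterate using that rank-one directions span.
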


By Definition \ref{def:rch} of the rank-one convex hull of a set $K$, it is clear that $\convrc K \subset \conv K$ and it is also immediate that we can reduce to quantify over rank-one convex functions that are zero on $K$, namely
\begin{equation}\label{eq:rchull}
    \convrc K=\{A\in\R^{n\times m}\colon f(A)\leq 0\,\text{ for all } f\hbox{ rank-one convex with }f|_K\equiv0\}.
\end{equation}
Actually, we can characterize this set in another useful way, by using the vanishing locus of a single (but in general complicated) rank-one convex function.
Let $d_K\colon\R^{n\times m}\to \R$ denote the distance function from $K$, let $1\leq p<\infty$ and $K\subset\R^{n\times m}$ be compact. Then
\begin{equation*}
    \convrc K = \{A\in \R^{n\times m}\colon\cenv d_K^p (A) = 0\}.
\end{equation*}
Therefore, the rank-one convex hull of $K$ is the zero-set of
\begin{equation}\label{eq:inf_laminate}
    \cenv d_K^2(A) = \inf \{\langle \nu , d_K^2 \rangle\colon\nu \hbox{ laminate of finite order, } \overline{\nu} = A\}.
\end{equation}
The \emph{squared} distance function $d_K^2$ has the appeal that, in the case when $K=\{K_1,\ldots, K_N\}$, it can be expressed as the minimum of finitely many simple polynomials:
$$
d_K^2(A)=\min_{i=1,\ldots,N}\|A-K_i\|^2.
$$
Since $K$ is compact, for every point in $\convrc K$ there exists a sequence $(\nu_j)_j$ of laminates of finite order that are weakly-* convergent to a laminate $\nu$ with barycenter the given point and support contained in $K$.

We conclude the section by recalling the notion of $T_N$ configuration, (see e.g., \cite[Definition 1]{Szekelyhidi05:rch2x2}) which will be one of the building blocks in the rest of the paper.
\begin{definition}
We say that the points $K_1,\ldots ,K_N \in \R^{n\times m}$ form a \textbf{$T_N$ configuration} if there exist $P\in \R^{n\times m}$, $C_1,\ldots ,C_M$ of rank-one, $\alpha_1,\ldots ,\alpha_N \in \R$ such that
\begin{align*}
    K_1 &= P + \alpha_1 C_1 \\
    K_2 &= P + C_1 + \alpha_2 C_2 \\
    &\vdots \\
    K_N &= P + C_1 + \ldots + C_{N-1} + \alpha_N C_N
\end{align*}
with $\alpha_i \geq 1$ for every $i$ and $\sum_{i = 1}^N C_i = 0$.
\end{definition}
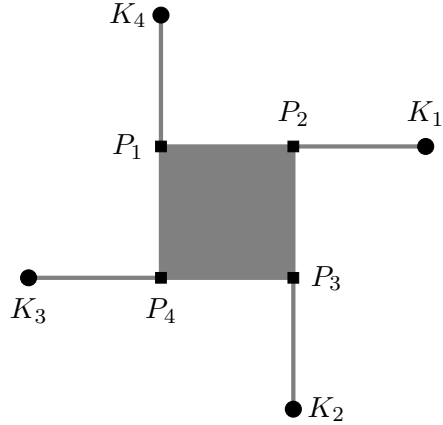
\begin{figure}[ht]
    \centering
    \begin{tikzpicture}[outer sep=0]
\begin{axis}[
width=2.4in,
height=2.4in,
scale only axis,
xmin=-3.5,
xmax=3.5,
ymin=-3.5,
ymax=3.5,
ticks = none, 
ticks = none,
axis background/.style={fill=white},
axis line style={draw=none} 
]

\addplot [color=black!50!white,ultra thick,solid,forget plot]
  table[row sep=crcr]{%
1   -1\\
1   -3\\
};

\addplot [color=black!50!white,ultra thick,solid,forget plot]
  table[row sep=crcr]{%
1   1\\
3   1\\
};

\addplot [color=black!50!white,ultra thick,solid,forget plot]
  table[row sep=crcr]{%
-1   1\\
-1   3\\
};

\addplot [color=black!50!white,ultra thick,solid,forget plot]
  table[row sep=crcr]{%
-1   -1\\
-3   -1\\
};

\addplot [color=black!50!white,fill=black!50!white,ultra thick,solid,forget plot]
  table[row sep=crcr]{%
1   -1\\
1   1\\
-1   1\\
-1   -1\\
1   -1\\
};

\addplot[only marks,mark=*,mark size=3pt,black,
]  coordinates {
    (3,1) (-1,3) (-3,-1) (1,-3)
};

\addplot[only marks,mark=square*,mark size=2pt,black,
]  coordinates {
    (1,-1) (1,1) (-1,1) (-1,-1)
};

\node at (-1.5,3) {$K_4$}; 
\node at (-3,-1.5) {$K_3$}; 
\node at (1.5,-3) {$K_2$}; 
\node at (3,1.5) {$K_1$};

\node at (-1.5,1) {$P_1$}; 
\node at (-1,-1.5) {$P_4$}; 
\node at (1.5,-1) {$P_3$}; 
\node at (1,1.5) {$P_2$}; 

\end{axis}
\end{tikzpicture}
    \caption{The classical Tartar square, a planar $T_4$ configuration.}
    \label{fig:T4}
\end{figure}
For $N\geq 4$, the rank-one convex hull of $T_N$ configurations in $\R^{n\times m}$ is nontrivial (see, e.g., \cite[Lem.~2]{Szekelyhidi05:rch2x2}).
\begin{remark}\label{rmk:degenerateT4}
    Sometimes in the literature, $T_4$ configurations are defined with the requirement $\alpha_i > 1$. We relax this condition to allow equality, in order to include \emph{degenerate} $T_4$ configurations, where some of the points in $K$ are rank-one connected.
\end{remark}

\begin{example}\label{ex:T4}
    As an example of a $T_4$ configuration, consider the by\textcolor{blue}{-}now classical \emph{Tartar square} \cite{Tartar93:SeparatelyConvex} in Figure \ref{fig:T4} for
    \[
    K=\{(3,1),\,(1,-3),\,(-3,-1),\,(-1,3)\},
    \]
    where we identify $2\times2$ diagonal matrices with vectors in $\R^2$. We have $P = (-1,1)$, $C_1 = (2,0)$, $C_2 = (0,-2)$, $C_3 = (-2,0)$, $C_4 = (0,2)$, $\alpha_i = 2$ for $i=1,\ldots,4$ and, for convenience, we denote $P_i = P + C_1 + \ldots + C_{i-1}$, with $P_1=P$.
    The rank-one convex hull $\convrc K$ is given by the union of the square $[-1,1]^2$ and the rank-one segments connecting the vertices of the square to the points of $K$.

    To prove that, e.g, $P_1$ belongs to $\convrc K$ we can construct a sequence of laminates by splitting the measure centered at $P_1$ along the vertical and horizontal segments bounding the $T_4$, as follows:
    \begin{gather*}
        \nu_1 = \delta_{P_1}, \quad \nu_2 = \frac{1}{2} \delta_{P_4} + \frac{1}{2} \delta_{K_4}, \quad \nu_3 = \frac{1}{4} \delta_{P_3} + \frac{1}{4} \delta_{K_3} + \frac{1}{2} \delta_{K_4}, \quad \ldots \\
        \nu_k = \tfrac{1}{2^{k-1}}\delta_{P_{i_k}} + \sum_{j=0}^{\frac{k-2}{4}} \tfrac{1}{2^{n-4j-1}}\delta_{K_{i_k}} + \sum_{j=0}^{\frac{k-3}{4}} \tfrac{1}{2^{n-4j-2}}\delta_{K_{i_k+1}} + \sum_{j=0}^{\frac{k-4}{4}} \tfrac{1}{2^{n-4j-3}}\delta_{K_{i_k+2}} + \sum_{j=0}^{\frac{k-5}{4}} \tfrac{1}{2^{n-4j-4}}\delta_{K_{i_k+3}},
    \end{gather*}
    where 
    \[
    i_k = 
    \begin{cases}
        1 & k\equiv 1 \mod 4, \\
        2 & k\equiv 0 \mod 4, \\
        3 & k\equiv 3 \mod 4, \\
        4 & k\equiv 2 \mod 4. \\
    \end{cases}\qedhere
    \]
    It can be seen in this example that sending $k\to \infty$ produces the weakly-* limit $\nu_\infty$ with support on $K$ only (the weight in front of $P_{i_k}$ goes to zero in the limit).
\end{example}
Example \ref{ex:T4} is a planar situation, which makes it a quite special case among the class of all $T_4$ configurations in $\R^{2\times 2}$. These are classified in \cite[Thm.~2]{Szekelyhidi05:rch2x2}, see also \cite{Kirchheim03:RigidityGeometry}.
It may be interesting to remark that, in contrast with the fact that the only inclusion-minimal configurations for rank-one convexity in $\R^{2\times 2}$ are $T_4$ configurations, it is shown in \cite{Matousek01:DirectionalConvexity} that for separate convexity in $\R^3$, there are inclusion-minimal $T_N$ configurations of arbitrary size, leading to an infinite Carath\'{e}odory number. 
In our case of triangular matrices, the Carath\'{e}odory number is also infinite (see \cite[Example 3.5 $(ii)$]{MatPle98:SeparateCH}, and the discussion after Theorem \ref{thm:rch5pts}), which highlights the nontriviality of the question at hand and shows how much these concepts differ from usual convexity.

\subsection{Rank-one convexity from calculus of variations}\label{subsec:calculus_variations}
Consider the following partial differential inclusion:
\begin{equation}\label{eq:PDE_curl}
    \begin{cases}
        \mathcal{A} v (x) = 0 & \hbox{for } x\in \R^m, \\
        v(x) \in K & \hbox{for almost every } x \in \R^m, 
    \end{cases}
\end{equation}
where $\mathcal A$ is a \textit{linear} partial differential operator and $K$ is a set encoding the nonlinear geometry of the problem. This framework, introduced by Tartar in the 1970s \cite{Ta79,Ta83,Ta05}, unifies several models in continuum mechanics, including the aforementioned developments in \cite{MS96,MulSve99:ConvexIntegration,DLSz,BDLIS}.
Our contribution is relevant to the case where  $\mathcal{A}$ is the curl operator $\mathcal{A} v = \partial_k v_{ij} - \partial_j v_{ik}$, $v = (v_{ij})_{i,j} : \R^m \to \R^{n\times m}$, $K$ finite. Then, $\mathcal{A} v = 0$ can be solved for $v = \nabla f$, for some $f:\R^m\to\R^n$. Therefore, asking if there exists a nonconstant Lipschitz solution to \eqref{eq:PDE_curl} is equivalent to ask whether there exists such a function $f$ satisfying $\nabla f \in K$. 
The construction of solutions of the above system of PDE’s involves computing the $\mathcal{D}$-convex hull of $K$, where $\mathcal{D}$ is a cone associated to the operator $\mathcal{A}$, known as the \emph{wave cone}. In the case of the curl, the wave cone is the cone of rank-one $(n \times m)$ matrices, and one talks about rank-one convexity. This motivates the study of the rank-one convex hull of finitely many points. 
In the context of more general operators and wave cones, a related problem was studied recently in \cite{SoTi}, to investigate the so-called four-state problem and the rigidity of linear partial differential operators. 
In this broader context, the wave cone of $\mathcal A$, leads to an appropriate notion of directional convexity, which we will treat in Section \ref{sec:directional_convexity}.

\section{The planar grid explained}\label{sec:plane}

In this section we recall some results for rank-one convexity for $2\times 2$ diagonal matrices, also known as separate convexity in $\R^2$. In this case, the cone of rank-one matrices is identified with $\mathcal{D}_{\rm diag} = \{xy=0\} \subset \R^2$.
We introduce some notation. Let $K = \{K_1,\ldots,K_N\}\subset \R^2$ be a finite set of points. Then, the \emph{grid} associated to $K$ is the line arrangement consisting of the $2N$ lines
\[
\bigcup_{i=1}^N \{ K_i + (0,1)\cdot\R,\: K_i + (1,0)\cdot\R \} = \bigcup_{i=1}^N (K_i + \mathcal{D}_{\rm diag}).
\]
The \emph{faces} of the grid (hence vertices, edges, $\ldots$) associated to $K$ are the cells of the line arrangement. By abuse of notation, we use the term \emph{grid of $\convrc K$} for the grid associated to $K$ intersected with $\convrc K$.
The following is a rephrasing of \cite[Proposition 5.1]{MatPle98:SeparateCH} and a special case of \cite[Theorem 1.1]{FraMat09:DHullsPlane}.
\begin{theorem}\label{thm:plane}
    The rank-one convex hull of a finite set $K$ of $2\times 2$ diagonal matrices is a finite union of closed faces of the grid associated to $K$.
\end{theorem}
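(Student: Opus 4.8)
The plan is to show that $\convrc K$ is a union of closed grid faces by proving two things: (i) $\convrc K$ is contained in the union of all closed grid faces that it meets, and (ii) if $\convrc K$ intersects the relative interior of a grid face $F$, then $F\subset\convrc K$. Together with compactness of $\convrc K$ and local finiteness of the grid, these imply the statement. Statement (i) is essentially automatic, since every point of $\R^2$ lies in some closed grid face; the content is in (ii), which says that membership in $\convrc K$ is constant on the relative interior of each face, and that this value propagates to the (closed) boundary.

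First I would recall the dual description \eqref{eq:rchull}: $A\in\convrc K$ iff every separately convex $f$ with $f|_K\le 0$ satisfies $f(A)\le 0$. The key elementary fact about separate convexity in $\R^2$ is that it is convexity along the two coordinate directions only, so a point $A$ is \emph{not} in $\convrc K$ precisely when it can be ``separated'' from $K$ by iterating one-dimensional separation along horizontal and vertical lines; equivalently, one builds a separately convex $f$ with $f|_K\le0$, $f(A)>0$ by a sweeping/peeling argument. The standard construction (as in \cite{MatPle98:SeparateCH}) is: let $U=\R^2\setminus\convrc K$; show $U$ is open and that it is ``staircase-connected'' in the sense that it is a union of horizontal and vertical open segments each of which avoids $K$ and has both endpoints reachable, so that one can propagate positivity of an admissible $f$ along such segments. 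The crucial geometric input is that within a single grid face, no horizontal or vertical line through an interior point of the face meets $K$, hence the one-dimensional convexity constraints are ``inactive'' across the interior of the face, which forces $\convrc K$ to be open-and-closed relative to each face interior — i.e., it either contains the whole relative interior or is disjoint from it.

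Next I would handle the boundary propagation needed for closedness. If $x$ lies in the relative interior of an edge $e$ of the grid and $x\in\convrc K$, I want the endpoints (vertices) of $e$, and more generally the closure of any face whose relative interior meets $\convrc K$, to also lie in $\convrc K$. This follows because $\convrc K$ is closed (it is an intersection of closed sublevel sets, or: it equals the zero set of $\cenv d_K^2$ as recalled after \eqref{eq:rchull}, a continuous function), so $\overline{\operatorname{relint} F\cap\convrc K}\subset\convrc K$; since $\operatorname{relint}F\cap\convrc K$ is either empty or all of $\operatorname{relint}F$, in the nonempty case its closure is $\bar F$. Hence every closed face whose interior meets $\convrc K$ is contained in $\convrc K$, and $\convrc K$ is exactly the union of these finitely many closed faces (finiteness from compactness of $\convrc K$ together with the fact that only finitely many grid faces are bounded, after noting $\convrc K\subset\conv K$ is bounded).

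The main obstacle is establishing the dichotomy in step two: that $\convrc K$ cannot partially fill the relative interior of a face. I expect to prove it by contradiction — if $x\in\operatorname{relint}F\cap\convrc K$ but some $y\in\operatorname{relint}F$ lies outside, then since $y\notin\convrc K$ there is a separately convex $f$ with $f|_K\le 0$ and $f(y)>0$; one then argues, using that the horizontal and vertical lines through points of $\operatorname{relint}F$ miss $K$ and stay in a controlled region, that $f$ must in fact be positive on a horizontal or vertical segment through $x$ as well (propagating the bump along directions in which no constraint from $K$ intervenes, and using one-dimensional convexity to push positivity monotonically), contradicting $x\in\convrc K$. Making this propagation argument precise — carefully tracking which coordinate lines through the face can or cannot hit $K$ or the rest of $\convrc K$ — is the technical heart, and it is exactly the content of \cite[Proposition 5.1]{MatPle98:SeparateCH} that we are invoking.
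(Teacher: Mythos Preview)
The paper does not prove Theorem~\ref{thm:plane}: it simply cites \cite[Proposition~5.1]{MatPle98:SeparateCH} and \cite[Theorem~1.1]{FraMat09:DHullsPlane}, adding only the remark that finiteness follows from $\convrc K\subset\conv K$. Your outline is therefore already more detailed than what the paper offers, and since you too explicitly defer the technical core to \cite[Proposition~5.1]{MatPle98:SeparateCH}, the two treatments are equivalent in substance.

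One caution about your sketch. The propagation heuristic you describe for the dichotomy --- take a separately convex $f$ with $f|_K\le 0$ and $f(y)>0$, then push positivity along coordinate segments until it reaches $x$ --- does not work as written: a single $f$ separating $y$ from $K$ need not satisfy $f(x)>0$, even when $x,y$ lie in the same open cell (one\nobreakdash-dimensional convexity bounds $f$ from above by endpoint values, not from below, so positivity does not propagate inward). What does work cleanly is the converse observation: a separately convex function on a closed rectangle attains its maximum at a corner, so if all four grid vertices of a closed $2$\nobreakdash-cell lie in $\convrc K$ then the whole cell does. The genuinely nontrivial direction --- that a point of $\convrc K$ in the relative interior of a cell forces the corners into $\convrc K$ as well --- is what requires the machinery of \cite{MatPle98:SeparateCH}, and is argued there through the peeling procedure (Algorithm~\ref{alg:plane_hull} in the present paper) rather than by a direct face\nobreakdash-by\nobreakdash-face dichotomy of the type you outline. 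Your high\nobreakdash-level structure (dichotomy, closedness, finiteness) is correct; just be aware that the mechanism behind the key step is not quite the one you sketched.
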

Finiteness follows for instance from the fact that the rank-one convex hull is contained in the convex hull. We refer to either \cite[Proposition 5.1]{MatPle98:SeparateCH} or \cite[Theorem 1.1]{FraMat09:DHullsPlane} for the proof. Instead, we emphasize that such a result is constructive. Given a finite set $K\subset \R^2$, draw the cones $\mathcal{D}_{\rm diag}$ centered at all the points in $K$ to form the grid associated to $K$. Then, simply remove all the vertices of the grid that have at most one neighbor in each coordinate direction. Iterating this process until no more vertices can be removed returns the rank-one convex hull of $K$. We make this precise in Algorithm \ref{alg:plane_hull}.
A refined implementation in \cite[Proposition 5.4]{MatPle98:SeparateCH} shows that $\convrc K$ can be computed in $O(N\log N)$ time. 
\begin{algorithm}[!ht]
    \caption{Rank-one convex hull of finitely many diagonal $2\times 2$ matrices.}
    \label{alg:plane_hull}
    \textsc{Input:} $K=\{K_1,\ldots,K_N\} = \{(x_1,y_1),\ldots,(x_N,y_N)\}\subset \R^2$, $N\in \mathbb{N}$.\\
    \textsc{Output:} $\convrc K$.
    \begin{algorithmic}[1]
    \State $x_{\min}, \; x_{\max} \gets \min x_i, \; \max x_i$
    \State $y_{\min}, \; y_{\max} \gets \min y_i, \; \max y_i$
    \State $B_0 \gets [ x_{\min} , x_{\max} ] \times [ y_{\min} , y_{\max} ]$
    \State $V \gets $ all the vertices of the grid associated to $K$
    \For{$i > 0$}
        \For{$v\in V \setminus K$} \label{alg_step:conv_vert}
            \If {both grid lines at $v$ have points of $V$ only on one side of $v$}
                \State remove $v$ from $V$
            \EndIf
        \EndFor
        \If{no $v$ has been removed}
            \State stop all the loops
        \EndIf
        \State $B_i \gets $ union of the closed faces of the grid with extreme points in $V$
        \State $i \gets i+1$
    \EndFor
    \State \Return $B_i$
    \end{algorithmic}
\end{algorithm}

A consequence of Theorem \ref{thm:plane} is that the rank-one convex hull of a finite set of $2\times 2$ diagonal matrices is a semialgebraic set, whose boundary can be described by finitely many line segments and isolated points.

\begin{example}\label{ex:plane_grid}
Let 
\begin{equation*}
        K = \left\lbrace
        \begin{pmatrix}
            3 & 0 \\
            0 & 1
        \end{pmatrix},
        \begin{pmatrix}
            1 & 0 \\
            0 & -3
        \end{pmatrix},
        \begin{pmatrix}
            -3 & 0 \\
            0 & -1
        \end{pmatrix},
        \begin{pmatrix}
            -1 & 0 \\
            0 & 3
        \end{pmatrix},
        \begin{pmatrix}
            2 &0 \\
            0 & 2
        \end{pmatrix}
        \right\rbrace \subset \R^2.
    \end{equation*}
    Its rank-one convex hull can be computed using Algorithm \ref{alg:plane_hull}, with only three iterations. The initial candidate is the box $[-3,3]\times[-3,3]$, and Figure \ref{fig:grid} displays all the steps in the algorithm. The right-most shape is the rank-one convex hull of $K$, and it can be seen from the figure that no other grid points are allowed to be removed.
    \begin{figure}[ht]
        \centering
        \begin{tikzpicture}[scale=0.5, outer sep=0]

    % Draw the main filled areas
    \fill[black!15!white] (3, -3) -- (3, 3) -- (-3, 3) -- (-3, -3) -- cycle;

    % Draw solid lines
    \draw[black!50!white, ultra thick] (3, -3) -- (3, 3);
    \draw[black!50!white, ultra thick] (2, -3) -- (2, 3);
    \draw[black!50!white, ultra thick] (1, -3) -- (1, 3);
    \draw[black!50!white, ultra thick] (-1, -3) -- (-1, 3);
    \draw[black!50!white, ultra thick] (-3, -3) -- (-3, 3);
    \draw[black!50!white, ultra thick] (-3, 3) -- (3, 3);
    \draw[black!50!white, ultra thick] (-3, 2) -- (3, 2);
    \draw[black!50!white, ultra thick] (-3, 1) -- (3, 1);
    \draw[black!50!white, ultra thick] (-3, -1) -- (3, -1);
    \draw[black!50!white, ultra thick] (-3, -3) -- (3, -3);

    % Draw solid lines
    \draw[black, dotted, ultra thick] (3, -3) -- (3, 3);
    \draw[black, dotted, ultra thick] (2, -3) -- (2, 3);
    \draw[black, dotted, ultra thick] (1, -3) -- (1, 3);
    \draw[black, dotted, ultra thick] (-1, -3) -- (-1, 3);
    \draw[black, dotted, ultra thick] (-3, -3) -- (-3, 3);
    \draw[black, dotted, ultra thick] (-3, 3) -- (3, 3);
    \draw[black, dotted, ultra thick] (-3, 2) -- (3, 2);
    \draw[black, dotted, ultra thick] (-3, 1) -- (3, 1);
    \draw[black, dotted, ultra thick] (-3, -1) -- (3, -1);
    \draw[black, dotted, ultra thick] (-3, -3) -- (3, -3);

    % Draw the points
    \filldraw[black] (3, 1) circle (4pt);
    \filldraw[black] (-1, 3) circle (4pt);
    \filldraw[black] (-3, -1) circle (4pt);
    \filldraw[black] (1, -3) circle (4pt);
    \filldraw[black] (2, 2) circle (4pt);
    
    % \node at (3,3) {$\triangle$};
    % \node at (3,-3) {$\triangle$};
    % \node at (-3,3) {$\triangle$};
    % \node at (-3,-3) {$\triangle$};
    
    % \node at (2,3) {$\square$};
    % \node at (3,2) {$\square$};
    % \node at (-3,2) {$\square$};
    % \node at (-3,2) {$\square$};

    % Add the labels
    % \node at (2.7,0.6) {$\pi(K_1)$};
    % \node at (0.2,-3) {$\pi(K_2)$}; 
    % \node at (-2.7,-0.6) {$\pi(K_3)$}; 
    % \node at (-0.2,3) {$\pi(K_4)$}; 
    % \node at (2,2.4) {$\pi(K_5)$};
\end{tikzpicture}
        \begin{tikzpicture}[scale=0.5, outer sep=0]

    % Draw the main filled areas
    \fill[black!15!white] (3, -1) -- (3, 2) -- (2, 2) -- (2, 3) -- (-1, 3) -- (-1, 2) -- (-3, 2) -- (-3, -1) -- (-1, -1) -- (-1, -3) -- (2, -3) -- (2, -1) -- cycle;
    
    % Draw solid lines
    \draw[black!50!white, ultra thick] (3, -1) -- (3, 2);
    \draw[black!50!white, ultra thick] (2, -3) -- (2, 3);
    \draw[black!50!white, ultra thick] (1, -3) -- (1, 3);
    \draw[black!50!white, ultra thick] (-1, -3) -- (-1, 3);
    \draw[black!50!white, ultra thick] (-3, -1) -- (-3, 2);
    
    \draw[black!50!white, ultra thick] (-1, 3) -- (2, 3);
    \draw[black!50!white, ultra thick] (-3, 2) -- (3, 2);
    \draw[black!50!white, ultra thick] (-3, 1) -- (3, 1);
    \draw[black!50!white, ultra thick] (-3, -1) -- (3, -1);
    \draw[black!50!white, ultra thick] (-1, -3) -- (2, -3);

    % Draw dotted lines
    \draw[black, dotted, ultra thick] (3, -1) -- (3, 2);
    \draw[black, dotted, ultra thick] (2, -3) -- (2, 3);
    \draw[black, dotted, ultra thick] (1, -3) -- (1, 3);
    \draw[black, dotted, ultra thick] (-1, -3) -- (-1, 3);
    \draw[black, dotted, ultra thick] (-3, -1) -- (-3, 2);
    
    \draw[black, dotted, ultra thick] (-1, 3) -- (2, 3);
    \draw[black, dotted, ultra thick] (-3, 2) -- (3, 2);
    \draw[black, dotted, ultra thick] (-3, 1) -- (3, 1);
    \draw[black, dotted, ultra thick] (-3, -1) -- (3, -1);
    \draw[black, dotted, ultra thick] (-1, -3) -- (2, -3);

    % Draw the points
    \filldraw[black] (3, 1) circle (4pt);
    \filldraw[black] (-1, 3) circle (4pt);
    \filldraw[black] (-3, -1) circle (4pt);
    \filldraw[black] (1, -3) circle (4pt);
    \filldraw[black] (2, 2) circle (4pt);

    % Add the labels
    % \node at (2.7,0.6) {$\pi(K_1)$};
    % \node at (0.2,-3) {$\pi(K_2)$}; 
    % \node at (-2.7,-0.6) {$\pi(K_3)$}; 
    % \node at (-0.2,3) {$\pi(K_4)$}; 
    % \node at (2,2.4) {$\pi(K_5)$};
\end{tikzpicture}
        \begin{tikzpicture}[scale=0.5, outer sep=0]

    % Draw the main filled areas
    \fill[black!15!white] (2, -1) -- (2, 2) -- (1, 2) -- (1, 3) -- (-1, 3) -- (-1, 1) -- (-3, 1) -- (-3, -1) -- cycle;

    % Draw solid lines
    \draw[black!50!white, ultra thick] (2, -1) -- (2, 2);
    \draw[black!50!white, ultra thick] (1, -3) -- (1, 3);
    \draw[black!50!white, ultra thick] (-1, -1) -- (-1, 3);
    \draw[black!50!white, ultra thick] (-3, -1) -- (-3, 1);
    
    \draw[black!50!white, ultra thick] (-1, 3) -- (1, 3);
    \draw[black!50!white, ultra thick] (-1, 2) -- (2, 2);
    \draw[black!50!white, ultra thick] (-3, 1) -- (3, 1);
    \draw[black!50!white, ultra thick] (-3, -1) -- (2, -1);

    % Draw dotted lines
    \draw[black, dotted, ultra thick] (2, -1) -- (2, 2);
    \draw[black, dotted, ultra thick] (1, -3) -- (1, 3);
    \draw[black, dotted, ultra thick] (-1, -1) -- (-1, 3);
    \draw[black, dotted, ultra thick] (-3, -1) -- (-3, 1);
    
    \draw[black, dotted, ultra thick] (-1, 3) -- (1, 3);
    \draw[black, dotted, ultra thick] (-1, 2) -- (2, 2);
    \draw[black, dotted, ultra thick] (-3, 1) -- (3, 1);
    \draw[black, dotted, ultra thick] (-3, -1) -- (2, -1);

    % Draw the points
    \filldraw[black] (3, 1) circle (4pt);
    \filldraw[black] (-1, 3) circle (4pt);
    \filldraw[black] (-3, -1) circle (4pt);
    \filldraw[black] (1, -3) circle (4pt);
    \filldraw[black] (2, 2) circle (4pt);

    % Add the labels
    % \node at (2.7,0.6) {$\pi(K_1)$};
    % \node at (0.2,-3) {$\pi(K_2)$}; 
    % \node at (-2.7,-0.6) {$\pi(K_3)$}; 
    % \node at (-0.2,3) {$\pi(K_4)$}; 
    % \node at (2,2.4) {$\pi(K_5)$};
\end{tikzpicture}
        \begin{tikzpicture}[scale=0.5, outer sep=0]

    % Draw the main filled areas
    \fill[black!15!white] (2, 1) -- (2, 2) -- (-1, 2) -- (-1, -1) -- (1, -1) -- (1, 1) -- cycle;

    % Draw solid lines
    \draw[black!50!white, ultra thick] (2, 1) -- (2, 2);
    \draw[black!50!white, ultra thick] (1, -3) -- (1, 2);
    \draw[black!50!white, ultra thick] (-1, -1) -- (-1, 3);
    
    \draw[black!50!white, ultra thick] (-1, 2) -- (2, 2);
    \draw[black!50!white, ultra thick] (-1, 1) -- (3, 1);
    \draw[black!50!white, ultra thick] (-3, -1) -- (1, -1);

    % Draw dotted lines
    \draw[black, dotted, ultra thick] (2, 1) -- (2, 2);
    \draw[black, dotted, ultra thick] (1, -3) -- (1, 2);
    \draw[black, dotted, ultra thick] (-1, -1) -- (-1, 3);
    
    \draw[black, dotted, ultra thick] (-1, 2) -- (2, 2);
    \draw[black, dotted, ultra thick] (-1, 1) -- (3, 1);
    \draw[black, dotted, ultra thick] (-3, -1) -- (1, -1);

    % Draw the points
    \filldraw[black] (3, 1) circle (4pt);
    \filldraw[black] (-1, 3) circle (4pt);
    \filldraw[black] (-3, -1) circle (4pt);
    \filldraw[black] (1, -3) circle (4pt);
    \filldraw[black] (2, 2) circle (4pt);

\end{tikzpicture}
        \caption{From left to right: the sets $B_0,\ldots,B_3$ from Algorithm \ref{alg:plane_hull} applied to the set of points $K$ (black dots) from Example \ref{ex:plane_grid}. Indeed, $\convrc K = B_3$.}
        \label{fig:grid}
    \end{figure}
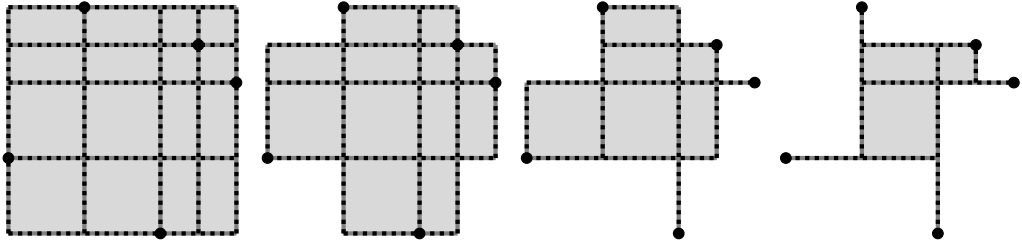
\end{example}

Given any set $K\subset \R^{n\times m}$, let
\begin{align*}
    K^{(0)} &= K, \\
    K^{(i+1)} &= \{ T_4 \hbox{ of points of } K^{(i)} \} \cup \{ \hbox{rank-one segments with extrema in } K^{(i)} \}, \\
    \conv_{T_4} K &= \cup_{i\geq 0} K^{(i)},
\end{align*}
and call the latter the \emph{$T_4$-convex hull} of $K$. This notion will be discussed further in Section \ref{sec:open_questions}. We state the following fact which is a straightforward consequence of other results in the literature, but is worth highlighting for later use. We provide the proof sketch for completeness.
\begin{corollary}\label{cor:convT4_plane}
    The rank-one convex hull of a finite set $K$ of $2\times 2$ diagonal matrices can be obtained as the $T_4$-convex hull of $K$ with only two iterations, namely 
    \[
    \convrc K = \conv_{T_4} K = K^{(0)}\cup K^{(1)} \cup K^{(2)}.
    \]
    In particular, if $K = \{K_1,\ldots,K_5\}$ with the first four points in $T_4$ configuration, then there exist $A,B,C \in \conv_{T_4} \{K_1,\ldots,K_4\}$ such that 
    \[
    \convrc K = \conv_{T_4} K = K^{(0)}\cup \conv_{T_4} \{K_1,\ldots,K_4\} \cup \conv_{T_4} \{K_5,A,B,C\}.
    \]
\end{corollary}
\begin{proof}
    Since rank-one convexity for $2\times 2$ diagonal matrices has Carath\'eodory number equal to five \cite[Proposition 5.3]{MatPle98:SeparateCH}, we can reduce the statement to configurations of five points $K=\{K_1,\ldots,K_5\}$ with a connected rank-one convex hull, since we can treat each connected component separately. It is easy to check that there are only two such combinatorially distinct configurations in the plane, namely configurations with a combinatorially different grid, see Figure \ref{fig:possible_5pts}, left and right. Without loss of generality, assume that the first four points are in a $T_4$ configuration. In both cases, there exist points $A,B,C \in \conv_{T_4} \{K_1,\ldots,K_4\} \subset K^{(1)}$ such that $\convrc K = K \cup K^{(1)} \cup \conv_{T_4} \{K_5,A,B,C\}$. These points $A, B, C$ are grid vertices arising from the grid lines originated at $K_1, K_4, K_5$, where $K_5$ lies in the orthant defined by $K_1$ and $K_4$ (see Figure \ref{fig:possible_5pts}). The claim follows.
    \begin{figure}[ht]
        \centering
        \begin{tikzpicture}[scale=0.7, outer sep=0]

    % Draw the main filled areas
    \fill[black!15!white] (2, 1) -- (2, 2) -- (-1, 2) -- (-1, -1) -- (1, -1) -- (1, 1) -- cycle;

    % Draw solid lines
    \draw[black!50!white, ultra thick] (2, 1) -- (2, 2);
    \draw[black!50!white, ultra thick] (1, -3) -- (1, 2);
    \draw[black!50!white, ultra thick] (-1, -1) -- (-1, 3);
    
    \draw[black!50!white, ultra thick] (-1, 2) -- (2, 2);
    \draw[black!50!white, ultra thick] (-1, 1) -- (3, 1);
    \draw[black!50!white, ultra thick] (-3, -1) -- (1, -1);

    % Draw dotted lines
    \draw[black, dotted, ultra thick] (2, 1) -- (2, 2);
    \draw[black, dotted, ultra thick] (1, -3) -- (1, 2);
    \draw[black, dotted, ultra thick] (-1, -1) -- (-1, 3);
    
    \draw[black, dotted, ultra thick] (-1, 2) -- (2, 2);
    \draw[black, dotted, ultra thick] (-1, 1) -- (3, 1);
    \draw[black, dotted, ultra thick] (-3, -1) -- (1, -1);

    % Draw the points
    \filldraw[black] (3, 1) circle (4pt);
    \filldraw[black] (-1, 3) circle (4pt);
    \filldraw[black] (-3, -1) circle (4pt);
    \filldraw[black] (1, -3) circle (4pt);
    \filldraw[black] (2, 2) circle (4pt);

    \filldraw[black] (2,1) +(-4pt,-4pt) rectangle +(4pt,4pt) ;
    \filldraw[black] (-1,1) +(-4pt,-4pt) rectangle +(4pt,4pt) ;
    \filldraw[black] (-1,2) +(-4pt,-4pt) rectangle +(4pt,4pt) ;

    \node at (2,2.5) {$K_5$};
    \node at (-1.5,3) {$K_4$}; 
    \node at (-3,-1.5) {$K_3$}; 
    \node at (1.5,-3) {$K_2$}; 
    \node at (3,1.5) {$K_1$};
    
    \node at (-1.5,1) {$B$}; 
    \node at (-1.5,2) {$C$}; 
    \node at (2,0.6) {$A$}; 

\end{tikzpicture}
        \begin{tikzpicture}[scale=0.7, outer sep=0]

    % Draw the main filled areas
    \fill[black!15!white] (1, 1) -- (0, 1) -- (0, 2) -- (-1, 2) -- (-1, -1) -- (1, -1) -- (1, 1) -- cycle;

    % Draw solid lines
    \draw[black!50!white, ultra thick] (0, -1) -- (0, 2);
    \draw[black!50!white, ultra thick] (1, -3) -- (1, 1);
    \draw[black!50!white, ultra thick] (-1, -1) -- (-1, 3);
    
    \draw[black!50!white, ultra thick] (-1, 2) -- (0, 2);
    \draw[black!50!white, ultra thick] (-1, 1) -- (3, 1);
    \draw[black!50!white, ultra thick] (-3, -1) -- (1, -1);

    % Draw dotted lines
    \draw[black, dotted, ultra thick] (0, -1) -- (0, 2);
    \draw[black, dotted, ultra thick] (1, -3) -- (1, 1);
    \draw[black, dotted, ultra thick] (-1, -1) -- (-1, 3);
    
    \draw[black, dotted, ultra thick] (-1, 2) -- (0, 2);
    \draw[black, dotted, ultra thick] (-1, 1) -- (3, 1);
    \draw[black, dotted, ultra thick] (-3, -1) -- (1, -1);

    % Draw the points
    \filldraw[black] (3, 1) circle (4pt);
    \filldraw[black] (-1, 3) circle (4pt);
    \filldraw[black] (-3, -1) circle (4pt);
    \filldraw[black] (1, -3) circle (4pt);
    \filldraw[black] (0, 2) circle (4pt);

    \filldraw[black] (0,1) +(-4pt,-4pt) rectangle +(4pt,4pt) ;
    \filldraw[black] (-1,1) +(-4pt,-4pt) rectangle +(4pt,4pt) ;
    \filldraw[black] (-1,2) +(-4pt,-4pt) rectangle +(4pt,4pt) ;

    \node at (0.5,2.5) {$K_5$};
    \node at (-1.5,3) {$K_4$}; 
    \node at (-3,-1.5) {$K_3$}; 
    \node at (1.5,-3) {$K_2$}; 
    \node at (3,1.5) {$K_1$};
    
    \node at (-1.5,1) {$B$}; 
    \node at (-1.5,2) {$C$}; 
    \node at (0.5,1.4) {$A$}; 
\end{tikzpicture}
        \caption{The two combinatorially possible configurations of five points in the plane with connected rank-one convex hull.}
        \label{fig:possible_5pts}
    \end{figure}
\end{proof}

\section{Triangular matrices}\label{sec:tri_matrices}
Identify $\R^3$ with the space of $2\times 2$ (upper) triangular matrices 
\[
A = \begin{pmatrix} x & z \\ 0 & y \end{pmatrix}
\]
and consider the associated cone of rank-one matrices $\Cm = \{xy=0\}\subset \R^3$. It is the union of two planes orthogonal to the $x$ and to the $y$ axis respectively. By abuse of notation denote $x(A)$, $y(A)$, $z(A)$ the first, second, and third coordinate of $A$, respectively. Let $\pi \colon \R^3 \to \R^2$ be the projection onto the first two coordinates. We may and will interpret $\R^2$ as the set of diagonal matrices. Then, $\pi(\Cm)=\{xy=0\}\subset \R^2$ is the cone $\mathcal{D}_{\rm diag}$ of rank-one directions for $2\times 2$ diagonal matrices: we say that the projection $\pi$ is \emph{compatible} with the two cones of triangular and diagonal rank-one matrices.
To study \emph{triangular hulls}, namely rank-one convex hulls in the case of triangular $2\times 2$ matrices, we will leverage the known construction of rank-one convex hulls in the plane via the compatibility of the rank-one cones. Note that in this setting $\pi(\convrc K) \subset \convrc \pi(K)$.
Since the inclusion-minimal nontrivial configuration in the plane is a $T_4$ \cite{Szekelyhidi05:rch2x2}, that is also the smallest nontrivial case in the triangular case, and we characterize it explicitly in the following proposition.  

\begin{proposition}\label{prop:T4}
    Let $K = \{K_1,\ldots,K_4\}\subset\R^3$ be a set of triangular $2\times 2$ matrices such that $\pi(K)$ is in a (possibly degenerate) $T_4$ configuration. Then, there exists a quadratic polynomial of the form $q(x,y,z) = z + \alpha x y + \beta x + \gamma y + \delta$ such that 
    \[
    \convrc K = ((\convrc \pi(K) )\times \R) \cap \{q = 0\}.
    \]
\end{proposition}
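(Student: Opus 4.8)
The plan is to show that $\convrc K$ is the graph, over $\convrc\pi(K)$, of the single function implicitly defined by $\{q=0\}$, by proving the two set inclusions separately. Fix the data of the planar $T_4$ configuration $\pi(K)$: points $P_1=P$, $P_{i+1}=P_i+C_i$ with $C_i\in\mathcal{D}_{\rm diag}$ alternately horizontal and vertical and $\sum_i C_i=0$, so that $\pi(K_i)=P_i+\alpha_i C_i$ with $\alpha_i\ge 1$, and so that $R:=\conv\{P_1,\dots,P_4\}$ is a (possibly degenerate) axis-parallel rectangle whose edges are the rank-one segments $[P_i,P_{i+1}]$. The first ingredient is the elementary fact that a function $f\colon\R^3\to\R$ that is affine along every rank-one line must have the form $c_0+c_1x+c_2y+c_3z+c_4xy$: affinity along the $x$-, $y$- and $z$-axes forces $\partial_{xx}f=\partial_{yy}f=\partial_{zz}f=0$, affinity along the rank-one directions $(1,0,1)$ and $(0,1,1)$ forces $\partial_{xz}f=\partial_{yz}f=0$, and only $\partial_{xy}f$ stays unconstrained because $(1,1,0)$ has rank two (this is easily made rigorous with second difference quotients, assuming no regularity). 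Consequently any $q=z+\alpha xy+\beta x+\gamma y+\delta$ is affine along rank-one lines, so \emph{both} $q$ and $-q$ are rank-one convex.

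The existence of $q$ and the inclusion $\convrc K\subseteq(\convrc\pi(K)\times\R)\cap\{q=0\}$ come first. Imposing $q(K_i)=0$ for $i=1,\dots,4$ is a linear system for $(\alpha,\beta,\gamma,\delta)$ with matrix $M$ whose $i$-th row is $\bigl(x(K_i)y(K_i),\,x(K_i),\,y(K_i),\,1\bigr)$. Affine changes of coordinates $(x,y,z)\mapsto(\lambda x+p_1,\mu y+p_2,z+p_3)$ with $\lambda\mu\ne 0$ preserve rank-one directions, the $T_4$ structure, and the family of polynomials above, so I may normalize $R=[0,1]^2$; expanding $\det M$ along its first column and substituting $\alpha_i=1+\beta_i$ with $\beta_i\ge 0$ then exhibits $\det M$ as a sum of terms each $\le 0$, one of them $\le -1$. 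Hence $\det M\ne 0$, and $q$ exists and is unique. Since $q,-q$ are rank-one convex and vanish on $K$, \eqref{eq:rchull} gives $q\equiv 0$ on $\convrc K$; combined with $\pi(\convrc K)\subseteq\convrc\pi(K)$ (compatibility of the two cones) this yields the claimed inclusion.

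For the reverse inclusion, $\{q=0\}$ is the graph of $\psi(x,y):=-\alpha xy-\beta x-\gamma y-\delta$ and $\pi$ restricts to a bijection $\{q=0\}\to\R^2$, so it suffices to prove $(v,\psi(v))\in\convrc K$ for every $v\in\convrc\pi(K)$. I would first lift the core points: from the $z$-coordinates of $K$ and the $\alpha_i$ one solves a triangular recursion around the $4$-cycle for rank-one vectors $\tilde C_i=(C_i,\gamma_i)\in\R^3$ and a base point $\tilde P_1$ with $K_i=\tilde P_i+\alpha_i\tilde C_i$, $\tilde P_{i+1}=\tilde P_i+\tilde C_i$, $\sum_i\tilde C_i=0$; the one closing equation $\sum_i\gamma_i=0$ is solvable because its coefficient in the unknown $z(\tilde P_1)$ equals $\prod_i(1-\tfrac1{\alpha_i})-1<0$. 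Thus $\{K_1,\dots,K_4\}$ is a (possibly degenerate) $T_4$ configuration in $\R^3$ with core $\tilde P_1,\dots,\tilde P_4$, so $\tilde P_i\in\convrc K$ by \cite[Lem.~2]{Szekelyhidi05:rch2x2}, and since $\tilde P_i\in\convrc K\subseteq\{q=0\}$ necessarily $\tilde P_i=(P_i,\psi(P_i))$. Now I use the explicit planar hull $\convrc\pi(K)=R\cup\bigcup_{i=1}^4[P_i,K_i]$, classical for $T_4$ configurations and also obtainable from Theorem~\ref{thm:plane} by running Algorithm~\ref{alg:plane_hull} on the grid of $\pi(K)$. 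If $v\in[P_i,K_i]$, then $(v,\psi(v))\in[\tilde P_i,K_i]$, a rank-one segment between two points of $\convrc K$, since $q$ vanishes at $\tilde P_i$ and $K_i$ and is affine along it. If $v=(x,y)\in R=[a',b']\times[c',d']$, then, since $\psi$ is separately affine, $(x,y,\psi(x,y))$ lies on the rank-one segment from $(a',y,\psi(a',y))$ to $(b',y,\psi(b',y))$, and each endpoint lies on the rank-one segment joining the two $\tilde P_i$ over the corresponding vertical edge of $R$; this is a second-order laminate with barycenter $(x,y,\psi(x,y))$ and support in $\{\tilde P_1,\dots,\tilde P_4\}\subseteq\convrc K$, hence $(x,y,\psi(x,y))\in\convrc K$. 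This gives $\supseteq$ and completes the proof.

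The genuinely substantive points are (a) that $\det M\ne 0$ for \emph{every} admissible $T_4$, degenerate ones included, which the normalization plus the sign of the column expansion settles once and for all; and (b) that the degenerate cases ($\alpha_i=1$, segments $[P_i,K_i]$ collapsing to points, a lower-dimensional rectangle $R$) really do fit the same picture, the key being that the lift of the $T_4$ to $\R^3$ exists and that $\tilde P_i\in\convrc K$ — both guaranteed by the strictly negative holonomy $\prod_i(1-1/\alpha_i)-1$. The only external input is the classical shape of the planar $T_4$ hull, which is in any case recoverable from Theorem~\ref{thm:plane} via Algorithm~\ref{alg:plane_hull}.
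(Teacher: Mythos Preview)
Your proof is correct and follows essentially the same approach as the paper: construct a quadric $q=z+\alpha xy+\beta x+\gamma y+\delta$ vanishing on $K$, use rank-one convexity of $\pm q$ to get $\convrc K\subseteq\{q=0\}$, lift the planar $T_4$ to a $T_4$ in $\R^3$, and laminate along the rulings to get the reverse inclusion. The only organizational difference is that the paper first solves for the lifted core points $Q_i$ (via the $4\times4$ system in the $\lambda_i$) and then reads off $q$ from the eight points $K_i,Q_i$, whereas you first solve for $q$ directly from the four $K_i$ (via your determinant argument on $M$) and then lift the core separately using the holonomy coefficient $\prod_i(1-1/\alpha_i)-1<0$; these are equivalent linear-algebra computations.
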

\begin{proof}
    Assume that $\pi(K)$ has no rank-one connection.
    Let us denote by $P_1,\ldots,P_4 \in \R^2$ the vertices of the square in the planar $T_4$ of $\pi(K)$. By the definition of $T_4$ configurations, there exist four scalars $\lambda_i \in (0,1)$ such that $P_i = \lambda_i \pi(K_{i-1}) + (1-\lambda_i) P_{i-1}$ for $i = 1,\ldots, 4$, where the indices are considered modulo $4$. Our goal is to construct points $Q_1,\ldots, Q_4$ such that $\pi(Q_i) = P_i$ and $Q_i = \lambda_i K_{i-1} + (1-\lambda_i) Q_{i-1}$, again modulo $4$. Therefore, the only unknowns of these equations are the heights, namely $z(Q_i)$. We obtain them by inverting the following linear system:
    \[
    \begin{pmatrix}
        z(K_1)\\
        z(K_2)\\
        z(K_3)\\
        z(K_4)
    \end{pmatrix}
    =
    \begin{pmatrix}
        \frac{\lambda_1 - 1}{\lambda_1} & \frac{1}{\lambda_1} & 0 & 0  \\
        0 & \frac{\lambda_2 - 1}{\lambda_2} & \frac{1}{\lambda_2} & 0 \\
        0 & 0 & \frac{\lambda_3 - 1}{\lambda_3} & \frac{1}{\lambda_3} \\
        \frac{1}{\lambda_4} & 0 & 0 & \frac{\lambda_4 - 1}{\lambda_4} \\
    \end{pmatrix}
    \begin{pmatrix}
        z(Q_1)\\
        z(Q_2)\\
        z(Q_3)\\
        z(Q_4)
    \end{pmatrix}.
    \]
    Let us now examine the quadratic surface $q=0$ through $K_1,\ldots,K_4,Q_1,\ldots,Q_4$. It necessarily contains four lines: one through the points $K_1,Q_2,Q_1$, another one the points $K_3,Q_4,Q_3$, another one the points $K_2,Q_3,Q_2$ and another one the points $K_4,Q_1,Q_4$. Two of these lines are parallel to the $x$ axis and two are parallel to the $y$ axis, which means that the equation of $q$ must satisfy 
    \[
    q \cap \{x = x_0\} = \hbox{ line in } y, z, \qquad q \cap \{y = y_0\} = \hbox{ line in } x, z.
    \]
    This forces the coefficients of $x^2, y^2, z^2, xz, yz$ to be zero. Hence, the quadric has equation (up to global constant)
    \[
    q(x,y,z) = z + \alpha xy + \beta x + \gamma y + \delta = 0,
    \]
    for some $\alpha, \beta, \gamma, \delta \in \R$.
    Such a quadratic polynomial is affine in rank-one directions, hence it is a rank-one convex function.
    Since $K\subset\{q = 0\}$, by equation \eqref{eq:rchull} applied to $f=q$ and $f=-q$, we get that $\convrc K \subset\{q = 0\}$. Moreover, all the points in $(\convrc \pi(K))  \times \R$ and in $\{q = 0\}$ belong to the triangular hull since we can split any measure following the rulings of $q$ to reach $K$. Hence, the claim follows.

    The case in which there are rank-one connections in $\pi(K)$ is analogous, despite the fact that some $P_i$ might coincide with points in $K$.
\end{proof}

See Figure \ref{fig:T4triang} for a plot of such a $T_4$ configuration and the associated quadric $\{q=0\}$. Notice that the proof of Proposition \ref{prop:T4} is constructive and gives an algorithm to compute the implicit equation of $q$. 
\begin{remark}\label{rmk:T4_all}
    Note that an argument analogous to the proof of Proposition \ref{prop:T4} shows that the rank-one convex hull of any four $2\times 2$ matrices in $\R^4 \sim \left(\begin{smallmatrix}
        x & z \\
        w & y
    \end{smallmatrix}\right)$ is semialgebraic and contained in a hypersurface of equation $xy-zw + \alpha x + \beta y + \gamma z + \delta w + \epsilon = 0$ intersected with the affine span of the four matrices. This partially uses \cite{Szekelyhidi05:rch2x2}.
\end{remark}

We will use $T_4$ configurations in $\R^3$ as our building blocks for more general triangular hulls. We start with a specific $5$-points configuration which highlights the subtleties of the $3$-dimensional case and the methods one can use to overcome them.

\begin{theorem}\label{thm:rch5pts}
    The triangular hull of the $5$-point set 
    \begin{equation}\label{eq:K5pts}
        K = \left\lbrace
        \begin{pmatrix}
            3 & 0 \\
            0 & 1
        \end{pmatrix},
        \begin{pmatrix}
            1 & 0 \\
            0 & -3
        \end{pmatrix},
        \begin{pmatrix}
            -3 & {-1} \\
            0 & -1
        \end{pmatrix},
        \begin{pmatrix}
            -1 & 0 \\
            0 & 3
        \end{pmatrix},
        \begin{pmatrix}
            2 & 2 \\
            0 & 2
        \end{pmatrix}
        \right\rbrace \subset \R^3
    \end{equation}
    is a semialgebraic set. It can be described using six linear and four quadratic polynomials, namely
    \begin{gather}
        \label{eq:5pts_linear}
        \ell_1 = x-2, \quad \ell_2 = x-1, \quad \ell_3 = x+1, \quad \ell_4 = y-2, \quad \ell_5 = y-1, \quad \ell_6 = y+1, \\
        \label{eq:quadrics_lower}
        {\color{MidnightBlue} q_1 = 60 z+5 x y-9 x-3 y+15}, \quad {\color{Apricot!80!black!80!red} q_2 = 45 z -29 x y+26 x-35 y+44}, \\
        \label{eq:quadrics_upper}
        {\color{ForestGreen} q_3 = 118 z-12 x y-57 x-19 y-36}, \quad {\color{RedViolet} q_4 = 118 z-143 x y+205 x+112 y-298}.
    \end{gather}
\end{theorem}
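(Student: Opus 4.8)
My plan rests on three ingredients: the planar computation of Theorem~\ref{thm:plane} applied to $\pi(K)$, Proposition~\ref{prop:T4} applied to several $T_4$ sub-configurations, and the decisive elementary remark that each of $q_1,\dots,q_4$ is \emph{affine along rank-one directions}, so that every real-linear combination $\sum_i c_i q_i$ (in particular $\pm q_i$) is rank-one convex. \emph{Step 1 (projection to the plane).} One checks that $\pi(K)$ is precisely the five-point set of Example~\ref{ex:plane_grid}, whose rank-one convex hull $B=\convrc\pi(K)$ is the hexagon bounded by $\ell_1=\dots=\ell_6=0$ together with four rank-one ``whiskers'' issued along $\{\ell_5=0\}$ and $\{\ell_2=0\}$ toward the outer points. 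By compatibility of the cones, $\pi(\convrc K)\subseteq B$, which already accounts for the linear polynomials \eqref{eq:5pts_linear}. Moreover the third coordinate direction $(0,0,1)$ corresponds to the rank-one matrix $\begin{pmatrix}0&1\\0&0\end{pmatrix}$, so $\convrc K$ is stable under vertical rank-one segments: it is the region trapped between a ``floor'' and a ``ceiling'' over (a subset of) $B$.

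\emph{Step 2 (the four quadrics).} The subset $\{K_1,K_2,K_3,K_4\}$ is the lift of the Tartar square of Example~\ref{ex:T4}; Proposition~\ref{prop:T4} applies and a direct computation identifies the resulting quadric with $q_1$. The subset $\{K_2,K_3,K_4,K_5\}$ projects to a $T_4$ configuration over the rectangle $[-1,1]\times[-1,2]$, and Proposition~\ref{prop:T4} yields $q_3$. The remaining two, $q_2$ and $q_4$, arise in the same way from (possibly degenerate) $T_4$'s built from $\{K_1,K_4,K_5\}$ and $\{K_1,K_2,K_5\}$ respectively, together with an auxiliary grid point already known to lie in $\convrc K$ and lifted onto the quadric under construction. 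One records that $\max_K(-q_1)=\max_K q_3=0$, whereas $q_2$ and $q_4$ change sign on $K$.

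\emph{Step 3 (upper bound).} For any rank-one convex $f$ and $A\in\convrc K$ one has $f(A)\le\max_K f$; applying this to $f=\sum_i c_i q_i$ for all $c\in\R^4$ shows that the ``moment vector'' $(q_1(A),\dots,q_4(A))$ lies in the polytope $P:=\conv\{(q_1(K_j),\dots,q_4(K_j)):1\le j\le 5\}$. Together with $\pi(\convrc K)\subseteq B$ this gives $\convrc K\subseteq (q_1,\dots,q_4)^{-1}(P)\cap(B\times\R)$, and what remains is the explicit finite computation showing this set coincides with the one in \eqref{eq:5pts_linear}--\eqref{eq:quadrics_upper}. The key identities make the combinatorics transparent: writing the zero sets of $q_1,q_2$ as graphs $z=h_1(x,y),\,z=h_2(x,y)$ and likewise $z=h_3,h_4$ for $q_3,q_4$, one has $h_2-h_1=\tfrac{131}{180}(x+1)(y-1)$ and $h_4-h_3=\tfrac{131}{118}(x-1)(y-2)$, so the active floor switches between $q_1$ and $q_2$ exactly across $\{\ell_5=0\}$ and the active ceiling between $q_3$ and $q_4$ exactly across $\{\ell_2=0\}$; reading off the facets of $P$ then pins down the exact coefficients in \eqref{eq:quadrics_lower}--\eqref{eq:quadrics_upper}.

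\emph{Step 4 (lower bound, and the main obstacle).} For the reverse inclusion one realises every point of the claimed set as the barycenter of a finite-order laminate supported on $K$, or a weak-$^*$ limit of such. The mechanism is that laminate reachability is preserved under rank-one segments (splitting $\delta$ along the segment and then applying the two laminates), so starting from the two genuine $T_4$ hulls of Step~2 (reached via the splittings of Example~\ref{ex:T4} / Proposition~\ref{prop:T4}) and adjoining rank-one segments in the pattern of the three rounds of Algorithm~\ref{alg:plane_hull} on $\pi(K)$, then vertical segments in the direction $(0,0,1)$, one fills the whole region. I expect this matching to be the main obstacle: one must (i) certify the polygonal subdivision of $B$ and which of $q_1,\dots,q_4$ is the active floor, resp.\ ceiling, on each cell and on each whisker, (ii) verify the finitely many polynomial identities ensuring that the quadric patches glue along $\{\ell_2=0\}$, $\{\ell_5=0\}$ and at the whisker attachments, and (iii) confirm that the explicitly constructed laminate-reachable set is not strictly smaller than $(q_1,\dots,q_4)^{-1}(P)\cap(B\times\R)$ --- that is, that four quadrics genuinely suffice and no new algebraic surface is produced at a later lamination round. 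This is the step in which the five-point configuration, while still completely explicit, already displays the phenomena that the proof of Theorem~\ref{thm:semialg_triangular_hull} must control in general.
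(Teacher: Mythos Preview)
Your Steps 1, 2 and 4 align with the paper's argument: the projection to $\convrc\pi(K)$ supplies the six linear constraints, the quadrics arise from the four (degenerate) $T_4$ sub-configurations you name, and the inclusion $S\subseteq\convrc K$ is obtained by iterated lamination along the grid and then vertically, exactly as you sketch (the paper treats this as the \emph{easy} direction, not the main obstacle).

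The genuine gap is in Step 3. Testing only against rank-one \emph{affine} functions $\sum_i c_iq_i$ is strictly weaker than testing against rank-one convex ones, and the resulting moment-polytope constraint does not cut down to $\convrc K$. Concretely, take $A=(0,2,\tfrac{3}{10})$. Then $\pi(A)\in B$, and $\Phi(A)=(27,-\tfrac{25}{2},-\tfrac{193}{5},-\tfrac{193}{5})$ satisfies all five facet inequalities of the simplex $P=\conv\{\Phi(K_j)\}$; for instance the facet opposite $\Phi(K_2)$ reads $2q_1-14q_2-q_3+q_4\le 262$ and evaluates to $229$ at $A$. Yet $q_2(A)=-\tfrac{25}{2}<0$, so $A$ lies strictly below the floor $z=h_2$ on the cell $\{x\ge-1,\,y\ge1\}$ and hence $A\notin\convrc K$. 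Thus the ``explicit finite computation'' you defer would fail: $(q_1,\dots,q_4)^{-1}(P)\cap(B\times\R)\supsetneq\convrc K$.

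The paper closes this gap not with a finer polytope but by building genuinely rank-one \emph{convex}, piecewise test functions. It glues $-q_2$ with $-\tfrac34 q_1$ across $\{x=-1\}\cup\{y=1\}$ into a single function $f_\ell$ (and likewise $q_4$ with $q_3$ across $\{x=1\}\cup\{y=1\}$ into $f_u$), invoking \cite[Lemma~3.1]{Sverak90:ExamplesrcFunct} to certify rank-one convexity of the glued function. Since $f_\ell|_K=f_u|_K\equiv 0$, the inequalities $f_\ell\le 0$, $f_u\le 0$ give the sharp outer bound directly. Your identities $h_2-h_1=\tfrac{131}{180}(x+1)(y-1)$ and $h_4-h_3=\tfrac{131}{118}(x-1)(y-2)$ are precisely what make these gluings continuous; the missing idea is to take the piecewise maximum rather than a linear combination.
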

\begin{figure}[ht]
    \centering
    \begin{tikzpicture}[scale=0.85, outer sep=0]
    % Draw the main filled areas
    \fill[black!15!white] (2, 1) -- (2, 2) -- (-1, 2) -- (-1, -1) -- (1, -1) -- (1, 1) -- cycle;

    % Draw solid lines
    \draw[black!50!white, ultra thick] (1, 2) -- (1, -3);
    \draw[black!50!white, ultra thick] (-1, 1) -- (3, 1);
    \draw[black!50!white, ultra thick] (-1, -1) -- (-1, 3);
    \draw[black!50!white, ultra thick] (1, -1) -- (-3, -1);
    \draw[black!50!white, ultra thick] (-1, 2) -- (2, 2);
    \draw[black!50!white, ultra thick] (2, 1) -- (2, 2);

    % Draw dotted lines
    \draw[black, dotted, ultra thick] (1, -1) -- (-3, -1);
    \draw[black, dotted, ultra thick] (-1, 1) -- (3, 1);
    \draw[black, dotted, ultra thick] (-1, 2) -- (2, 2);
    \draw[black, dotted, ultra thick] (-1, 3) -- (-1, -1);
    \draw[black, dotted, ultra thick] (1, 2) -- (1, -3);
    \draw[black, dotted, ultra thick] (2, 2) -- (2, 1);

    % Draw the points
    \filldraw[black] (3, 1) circle (3pt);
    \filldraw[black] (-1, 3) circle (3pt);
    \filldraw[black] (-3, -1) circle (3pt);
    \filldraw[black] (1, -3) circle (3pt);
    \filldraw[black] (2, 2) circle (3pt);

    % Add the labels
    \node at (2.7,0.6) {$\pi(K_1)$};
    \node at (0.2,-3) {$\pi(K_2)$}; 
    \node at (-2.7,-0.6) {$\pi(K_3)$}; 
    \node at (-0.2,3) {$\pi(K_4)$}; 
    \node at (2,2.4) {$\pi(K_5)$};
\end{tikzpicture}
    \input{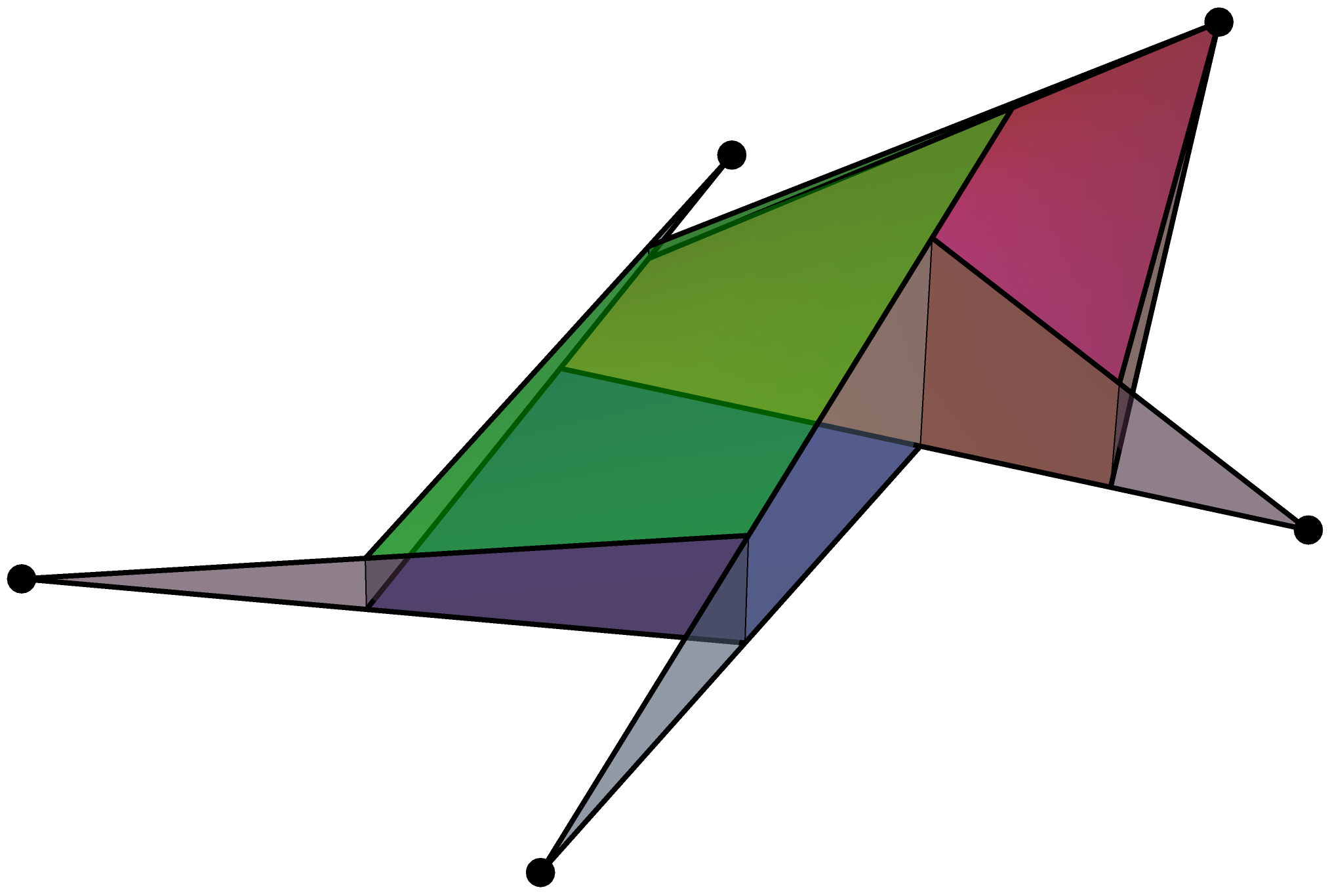}
    \caption{Semialgebraic sets from Theorem \ref{thm:rch5pts}. Left: the rank-one convex hull of $\pi(K)$. Right: the triangular hull of $K$.}
    \label{fig:thm5pts}
\end{figure}
\begin{proof}
    Let $S$ be the semialgebraic set in Figure~\ref{fig:thm5pts}. We will prove that $\convrc K = S$.
    The inclusion $S\subset \convrc K$ is easier. In fact, the black lines are in $\convrc K$ since we can keep splitting elementary measures along them and put all the weight on the points of $K$, evaluating at zero against the rank-one convex envelope of the square distance to $K$. As a consequence, also the points in the interior of the curvy squares defined by the quadrics are in the triangular hull, since we can split any measure to make it reach the boundary, and use the previous argument. Finally, also the points in the interior of $S$, by an analogous reasoning, belong to $\convrc K$.

    It is more subtle to prove that $S$ contains all the points of the triangular hull. Since $\pi(\convrc K) \subset \convrc \pi(K)$, then (a subset of) the linear forms in \eqref{eq:5pts_linear} appear in the boundary of $\convrc K$. We are left with the deletion of large $\pm z$ portions from $(\convrc \pi(K) ) \times \R$, and this is done using appropriate triangular-convex functions for Definition \ref{def:rch}. 
    Consider the blue quadric ${\color{MidnightBlue} q_1}$, obtained from the $T_4$ configuration $\{K_1,K_2,K_3,K_4\}$. Notice that ${\color{MidnightBlue} q_1}(K_i)=0$ for $i=1,\ldots,4$ and ${\color{MidnightBlue} q_1}(K_5)>0$. Therefore, $\convrc K \subset \{ -{\color{MidnightBlue} q_1}\leq 0 \}$. Analogously, one can show that $\convrc K \subset \{ {\color{ForestGreen} q_3}\leq 0 \}$. Unfortunately, it is not possible to repeat this reasoning for ${\color{Apricot!80!black!80!red} q_2}$ and ${\color{RedViolet} q_4}$, since both are zero at three $K_i$, positive at one, negative at another one. To solve this issue, define 
    \begin{align*}
    f_{\ell}(x,y,z) &= 
    \begin{cases}
        -{\color{Apricot!80!black!80!red} q_2}(x,y,z) & x\geq -1, y\geq 1;\\
        -\frac{3}{4}{\color{MidnightBlue} q_1}(x,y,z) & \hbox{otherwise}.
    \end{cases}
    \\
    f_{u}(x,y,z) &= 
    \begin{cases}
        {\color{RedViolet} q_4}(x,y,z) & x\geq 1, y\geq 1;\\
        {\color{ForestGreen} q_3}(x,y,z) & \hbox{otherwise}.
    \end{cases}
    \end{align*}
    These are rank-one convex functions by \cite[Lemma 3.1]{Sverak90:ExamplesrcFunct}, vanishing at all points $K_i$. Therefore, 
    \[
    \convrc K \subset \{ f_{\ell} \leq 0, f_{u}\leq 0\},
    \]
    and since $S = \big( (\convrc \pi(K) ) \times \R \big) \cap \{ f_{\ell} \leq 0, f_{u}\leq 0\}$, the claim follows. 
\end{proof}

The quadrics in \eqref{eq:quadrics_lower}, \eqref{eq:quadrics_upper} arise from (possibly degenerate, see Remark \ref{rmk:degenerateT4}) $T_4$ configurations, namely:
\begin{align*}
    & {\color{MidnightBlue} q_1} \longrightarrow  \{ K_1, K_2, K_3, K_4 \}, \\
    & {\color{Apricot!80!black!80!red} q_2} \longrightarrow\{ K_1, K_4, K_5, (-1,1,-\tfrac{4}{15}) \}, \\
    & {\color{ForestGreen} q_3} \longrightarrow\{ K_2, K_3, K_4, K_5 \}, \\
    & {\color{RedViolet} q_4} \longrightarrow \{ K_1, K_2, K_5, (-1,2,-\tfrac{155}{118}) \}.
\end{align*}
The two points that are not in $K$ but that play a fundamental role for the computation of the quadrics are obtained from the previous $T_4$. Specifically, $(-1,1,-\tfrac{4}{15})$ belongs to the blue $T_4$, whereas $(-1,2,-\tfrac{155}{118})$ belongs to the green one. The orange $T_4$ is displayed in Figure \ref{fig:T4triang}.
Analogously, it is possible to construct the rank-one convex hull of any $5$-points set of $2\times 2$ triangular matrices in the same way, via iterated computation of $T_4$ configurations.

Here we witness a big difference between standard convexity and rank-one convexity: 
the Carath\'eodory number of rank-one convexity is finite for $2\times 2$ diagonal matrices \cite[Proposition 5.3]{MatPle98:SeparateCH} and infinite in every other known case \cite{Kolar03:NonCompactHulls, Matousek01:DirectionalConvexity}.
These include our case of the rank-one convex hull of triangular $2\times 2$ matrices, which is examined in \cite[Example 3.5 $(ii)$]{MatPle98:SeparateCH}. Here the authors construct for any $n>0$ an explicit set of $n$ points $K\subset \R^3$ such that some of the points in $\convrc K$ do not belong to the hull of any $n-1$ points in $K$.

Our goal now is to generalize Theorem \ref{thm:rch5pts} to any set $K\subset\R^3$ of finitely many points. We will need the following result which relates the three-dimensional hull and the hull of its two-dimensional projection.
\begin{lemma}\label{lemma:equality_projection}
    Let $K = \{K_1,\ldots,K_N\}\subset\R^3$ be a set of triangular $2\times 2$ matrices. Then,
    \[
    \pi ( \convrc K) = \convrc \pi(K).
    \]
\end{lemma}
\begin{proof}
    For $N < 4$ the rank-one convex hull is either trivial or consists exactly of the rank-one connections of the points $K_i$ themselves.
    When $N=4$, if $\pi(K)$ is not in a (possibly degenerate) planar $T_4$ configuration, then $\convrc \pi(K)$ is a finite union of rank-one segments (or trivial), hence $\pi ( \convrc K) = \convrc \pi(K)$ is immediate. Otherwise, Proposition \ref{prop:T4} applies and the claim is proven. 
    
    Let now $N>4$. We already noticed that $\pi ( \convrc K) \subset \convrc \pi(K)$, so we have to prove the opposite inclusion. Let $(x,y)\in \convrc \pi(K)$. Since the Carath\'eodory number of rank-one convexity for diagonal $2\times 2$ matrices is $5$, without loss of generality, we assume that $(x,y)\in \convrc \{\pi(K_1),\ldots, \pi(K_5)\}$. By Corollary \ref{cor:convT4_plane}, the rank-one convex hull of these five diagonal $2\times 2$ matrices can be constructed by two iterations of $T_4$ constructions.
    If $(x,y) \in \convrc \{\pi(K_1),\ldots, \pi(K_4)\}$, then by case $N=4$ we have $(x,y)\in \pi (\convrc K)$. On the other hand, let $(x,y) \in \convrc \{A, B, C, \pi(K_5)\}$ for some points 
    \[
    A,B,C \in \convrc \{\pi(K_1),\ldots,\pi(K_4)\} = \pi(\convrc \{K_1,\ldots,K_4\})\subset \pi(\convrc
     K)
    \]
    that, together with $\pi(K_5)$ are in a (possibly degenerate) $T_4$ configuration, where for the equality we used the case $N=4$.
    Let $A',B',C' \in \convrc K$ be some points that project onto $A,B,C$, respectively.
    Then, applying again the case $N=4$, there exists $z \in \R$ such that $(x,y,z)\in \convrc \{A',B',C', K_5\} \subset \convrc K$, and the claim follows.
\end{proof}

In order to prove the next result, we introduce the following notation. We will call the \emph{upper hull} of $\convrc K$ the set
\[
\partial^u (\convrc K) = \Big\lbrace (x,y,z) \colon (x,y) \in \convrc \pi(K),\, z = \max \{c \colon (x,y,c) \in \convrc K\} \Big\rbrace
\]
of highest points above $\convrc \pi(K)$. Analogously, the \emph{lower hull} of $\convrc K$ is the set of lowest points above $\convrc \pi(K)$, namely
\[
\partial^\ell (\convrc K) = \Big\lbrace (x,y,z) \colon (x,y) \in \convrc \pi(K),\, z = \min \{c \colon (x,y,c) \in \convrc K\} \Big\rbrace.
\]
The existence of an upper and lower hull is guaranteed by Lemma \ref{lemma:equality_projection}.
Since in vertical planes rank-one convexity is just the usual convexity, it follows that $\convrc K$ is a vertical lamination between the sets $\partial^u (\convrc K)$, $\partial^\ell (\convrc K)$.

\begin{proof}[Proof of Theorem \ref{thm:semialg_triangular_hull}]
The proof will consist of an iterated reduction to simpler claims, that together establish the claim. Since $K$ is a finite set of points, $\convrc K$ is compact. Therefore, there exists $\Mmax\in \R_{>0}$ such that 
\[
\convrc K \subset \convrc \pi(K) \times (-\Mmax,\Mmax).
\]
We prove that the upper hull is covered by quadrics. The same proof works for the lower hull, by swapping maxima and minima. {We can assume without loss of generality that $\convrc K$ is connected, otherwise the argument applies to each connected component.}

We introduce the function
\[
u(x,y) \coloneqq \max \{ z \in \R \colon (x,y,z)\in \convrc K \},
\]
defined on $\convrc \pi(K)$. By Lemma~\ref{lemma:equality_projection}, $u$ is well-defined on $\convrc \pi(K)$.
We claim that $u$ is separately concave, namely, for every fixed $y$ (resp.\ $x$), the function $x \mapsto u(x,y)$ (resp.\ $y \mapsto u(x,y)$) is concave. Indeed, since $\convrc K$ is rank-one convex and vertical planes $\{y=\mathrm{const}\}$ and $\{x=\mathrm{const}\}$ are unions of rank-one lines, the sections
\[
\convrc K \cap \{y=\bar y\}, \qquad \convrc K \cap \{x=\bar x\}
\]
are convex subsets of $\R^2$, which implies the concavity of $u$ in each variable.

We now construct a function $w\colon \convrc \pi(K)\to\R$ by bilinear interpolation on each rectangular cell $R=[x_i,x_{i+1}]\times[y_j,y_{j+1}]$ of the grid. We define
\[
w(x,y) = (1-s)(1-t)u(x_i,y_j) + s(1-t)u(x_{i+1},y_{j}) + (1-s)tu(x_{i},y_{j+1}) + stu(x_{i+1},y_{j+1}),
\]
where
\[
s = \frac{x-x_i}{x_{i+1}-x_i}, \qquad t = \frac{y-y_j}{y_{j+1}-y_j}.
\]
We similarly define $w$  by linear interpolation on one-dimensional cells of the grid. 
By construction, $w$ is continuous and affine in each variable on every cell. It is separately concave, since along each horizontal and vertical grid line the function $w$ coincides with the piecewise affine interpolation of $u$, and the slopes of these interpolants are nonincreasing by concavity of $u$. Moreover, for every $K_\ell\in K$ we have
\begin{align}\label{eq:height_inequality}
z(K_\ell) \le u(\pi(K_\ell))  = w(\pi(K_\ell)).
\end{align}

{We aim to construct a separately concave extension of} the function $w$ to an open neighborhood of $\convrc\pi(K)$.
Let $\mathcal{G}$ be the grid associated to $\pi(K)$, and let $\mathcal{G}_\rc$ denote the subcomplex consisting of the closed faces of $\mathcal{G}$ contained in $\convrc\pi(K)$. By construction, $\mathcal{G}_\rc$ is obtained from $\mathcal{G}$ by iteratively removing
vertices which have at most one neighbor in each coordinate direction
(cf Algorithm~\ref{alg:plane_hull}).

We define a finite grid-neighborhood $\mathcal{G}_{\rm nh}$ of $\mathcal{G}_{\rc}$ by reversing the pruning procedure of Algorithm~\ref{alg:plane_hull}. 
We first extend the original grid so that the rank-one convex hull is strictly in its topological interior. Namely, if a cell of $\mathcal{G}_\rc$ intersects the boundary of the initial box $B=[x_{\min},x_{\max}]\times[y_{\min},y_{\max}]$, we extend the grid slightly outside $B$ by adding one extra row and one extra column of grid vertices, at distance $\eta>0$ from the corresponding sides of $B$ (see Figure 
\ref{fig:grid_neighborhood}, blue). Denote this by $\mathcal{G}_{\rm ext}$.
Now, starting from $\mathcal{G}_{\rc}$, we reinsert vertices of $\mathcal{G}_{\rm ext}$ one at a time, in the reverse order of their removal, until all vertices of $\mathcal{G}_{\rm ext}$ belonging to cells sharing at least one vertex with a cell of $\mathcal{G}_{\rc}$ have been restored: this will give $\mathcal{G}_{\rm nh}$.

Equivalently, we proceed in layers: first we add back the vertices which, at the current stage, have exactly two neighbors (these correspond to the square-marked points in Figure~\ref{fig:grid_neighborhood} at the first iteration); then we continue adding vertices which become eligible at the next stage (triangles in Figure~\ref{fig:grid_neighborhood}), and proceed like this (diamonds in Figure~\ref{fig:grid_neighborhood}).
If at some stage no vertex with two neighbors remains (e.g., after we add all diamonds), we continue by reinserting vertices with only one neighbor (circles in Figure~\ref{fig:grid_neighborhood}), until all vertices of $\mathcal{G}_{\rm nh}$ have been added.

\begin{figure}[h]
    \centering
    \begin{tikzpicture}[scale=0.5, outer sep=0]

    \begin{scope}[on background layer]
      \fill[YellowOrange, opacity=0.35]
        (2,1) -- (2,2) -- (-1,2) -- (-1,-1) -- (1,-1) -- (1,1) -- cycle;
    
      \draw[YellowOrange, opacity=0.35, line width=15pt,
            line cap=round, line join=round]
        (2,1) -- (2,2)
        (1,-3) -- (1,2)
        (-1,-1) -- (-1,3)
        (-1,2) -- (2,2)
        (-1,1) -- (3,1)
        (-3,-1) -- (1,-1);
    \end{scope}

    % Draw the main filled areas
    \fill[black!15!white] (2, 1) -- (2, 2) -- (-1, 2) -- (-1, -1) -- (1, -1) -- (1, 1) -- cycle;

    % Draw solid lines
    \draw[black!50!white, ultra thick] (2, 1) -- (2, 2);
    \draw[black!50!white, ultra thick] (1, -3) -- (1, 2);
    \draw[black!50!white, ultra thick] (-1, -1) -- (-1, 3);
    
    \draw[black!50!white, ultra thick] (-1, 2) -- (2, 2);
    \draw[black!50!white, ultra thick] (-1, 1) -- (3, 1);
    \draw[black!50!white, ultra thick] (-3, -1) -- (1, -1);

    % draw extra lines
    % right
    \draw[MidnightBlue!50!white, ultra thick] (3.8, -1) -- (3.8, 2);
    \draw[MidnightBlue!50!white, ultra thick] (3.8, -1) -- (3, -1);
    \draw[MidnightBlue!50!white, ultra thick] (3.8, 1) -- (3, 1);
    \draw[MidnightBlue!50!white, ultra thick] (3.8, 2) -- (3, 2);
    % left
    \draw[MidnightBlue!50!white, ultra thick] (-3.8, -3) -- (-3.8, 1);
    \draw[MidnightBlue!50!white, ultra thick] (-3.8, -3) -- (-3, -3);
    \draw[MidnightBlue!50!white, ultra thick] (-3.8, 1) -- (-3, 1);
    \draw[MidnightBlue!50!white, ultra thick] (-3.8, -1) -- (-3, -1);
    % above
    \draw[MidnightBlue!50!white, ultra thick] (-3, 3.8) -- (1, 3.8);
    \draw[MidnightBlue!50!white, ultra thick] (-1, 3.8) -- (-1, 3);
    \draw[MidnightBlue!50!white, ultra thick] (1, 3.8) -- (1, 3);
    \draw[MidnightBlue!50!white, ultra thick] (-3, 3.8) -- (-3, 3);
    % below
    \draw[MidnightBlue!50!white, ultra thick] (2, -3.8) -- (-1, -3.8);
    \draw[MidnightBlue!50!white, ultra thick] (-1, -3.8) -- (-1, -3);
    \draw[MidnightBlue!50!white, ultra thick] (1, -3.8) -- (1, -3);
    \draw[MidnightBlue!50!white, ultra thick] (2, -3.8) -- (2, -3);    
       
    % Draw solid lines
    \draw[black!50!white, ultra thick] (3, -3) -- (3, 3);
    \draw[black!50!white, ultra thick] (2, -3) -- (2, 3);
    \draw[black!50!white, ultra thick] (1, -3) -- (1, 3);
    \draw[black!50!white, ultra thick] (-1, -3) -- (-1, 3);
    \draw[black!50!white, ultra thick] (-3, -3) -- (-3, 3);
    \draw[black!50!white, ultra thick] (-3, 3) -- (3, 3);
    \draw[black!50!white, ultra thick] (-3, 2) -- (3, 2);
    \draw[black!50!white, ultra thick] (-3, 1) -- (3, 1);
    \draw[black!50!white, ultra thick] (-3, -1) -- (3, -1);
    \draw[black!50!white, ultra thick] (-3, -3) -- (3, -3);

    % Draw dotted lines
    \draw[black, dotted, ultra thick] (2, 1) -- (2, 2);
    \draw[black, dotted, ultra thick] (1, -3) -- (1, 2);
    \draw[black, dotted, ultra thick] (-1, -1) -- (-1, 3);
    
    \draw[black, dotted, ultra thick] (-1, 2) -- (2, 2);
    \draw[black, dotted, ultra thick] (-1, 1) -- (3, 1);
    \draw[black, dotted, ultra thick] (-3, -1) -- (1, -1);
    
    % Draw the points
    \filldraw[black] (3, 1) circle (4pt);
    \filldraw[black] (-1, 3) circle (4pt);
    \filldraw[black] (-3, -1) circle (4pt);
    \filldraw[black] (1, -3) circle (4pt);
    \filldraw[black] (2, 2) circle (4pt);

    % points added back in step one
    \node at (1,3) {$\square$};
    \node at (3,2) {$\square$};
    \node at (-3,1) {$\square$};
    \node at (-1,-3) {$\square$};
    \node at (2,-1) {$\square$};
    
    \node at (2,3) {$\triangle$};
    \node at (3,-1) {$\triangle$};
    \node at (-3,2) {$\triangle$};
    \node at (-3,-3) {$\triangle$};
    \node at (2,-3) {$\triangle$};
    
    \node at (-3,3) {$\Diamond$};
    \node at (3,3) {$\Diamond$};
    \node at (3,-3) {$\Diamond$};

    \node[scale=0.6,thick] at (3.8,1) {$\bigcirc$};
    \node[scale=0.6,thick] at (-3.8,-1) {$\bigcirc$};
    \node[scale=0.6,thick] at (-1,3.8) {$\bigcirc$};
    \node[scale=0.6,thick] at (1,-3.8) {$\bigcirc$};

    % \node[scale=1.4,thick] at (3.8,1) {$\star$};

    \foreach \p in {
      (3.8,-1),(3.8,2),
      (-3.8,-3),(-3.8,1),
      (-3,3.8),(1,3.8),
      (2,-3.8),(-1,-3.8)
    }
    {
      \node[draw=black, fill=none,
      star, star points=5, minimum size=8pt, star point ratio=1.8, inner sep=0pt] at \p {};
      % scale=1.2] at \p {$\star$};
    }
\end{tikzpicture}
    \caption{The extension procedure of $w$ onto the orange neighborhood $O$ of $\convrc \pi(K)$. The vertices are reinserted in layers: squares, then triangles, then diamonds, then open circles, then stars. In blue the extension of the initial box.}
    \label{fig:grid_neighborhood}
\end{figure}

We extend the vertex values  inductively to the vertices of $\mathcal{G}_{\rm nh}$. Suppose that the values have been assigned on the current subgrid, and let $P$ be the next vertex to be reinserted. By construction, $P$ has at most one neighbor in each coordinate direction among the vertices already present. 
Consequently, along each horizontal and vertical grid line, the discrete concavity inequalities involving $P$ arise only on one side of $P$, since the vertices on the other side have not yet been reinserted.

We assign to $P$ a value $\widetilde w(P)$ such that all these inequalities are satisfied. Since only finitely many such inequalities arise at this step, and each of them becomes valid for $\widetilde w(P)$ sufficiently small, such a choice is always possible. Moreover, by the order in which the vertices are reinserted, previously established inequalities do not involve $P$, hence they remain valid.
After finitely many steps, this yields an extension of $w$ to all vertices of $\mathcal{G}_{\rm nh}$ such that the discrete concavity inequalities hold along every horizontal and vertical grid line.

Let $G_{\rm nh}$ denote the union of all closed cells of $\mathcal{G}_{\rm nh}$ in $\R^2$. By construction of the extended grid, $\convrc\pi(K)\subset \operatorname{int}(G_{\rm nh})$. Choose $\rho>0$ small enough such that
\[
O \coloneqq \{p\in\R^2 \colon \dist(p,\convrc\pi(K))<\rho\} \subset \operatorname{int}(G_{\rm nh}).
\]
Then $O$ is an open neighborhood of $\convrc\pi(K)$.
Let $\widetilde w$ be the piecewise bilinear interpolation on this enlarged grid. By construction, $\widetilde w$ is separately concave on $O$ and coincides with $w$ on $\convrc\pi(K)$.
It follows that the function
\[
\widetilde{f}(x,y,z)\coloneqq z-\widetilde w(x,y)
\]
is rank-one convex on the open set $\Omega = O \times(-\Mmax,\Mmax)$. By \cite[Lemma 4.10]{Kirchheim03:RigidityGeometry}, which generalizes \cite[Lemma 3.5]{MulSve99:ConvexIntegration} {and} \cite[Lemma 2.3]{MulSve03:CounterexRegularity}, there exists a rank-one convex function $f$ defined on $\R^3$ that coincides with $\widetilde{f}$ on $\convrc K$. Hence, this function satisfies $f=z-w(x,y)$ on $\convrc K$ and $f\le 0$ on $K$ {by \eqref{eq:height_inequality}}. By \eqref{eq:rchull}, it follows that $f\le 0$ on $\convrc K$, hence
\[
u(x,y) \le w(x,y) \qquad \text{for all } (x,y)\in \convrc \pi(K).
\]
On the other hand, since $u$ is separately concave and $w$ is obtained by affine interpolation of the values of $u$ at the grid vertices, then
\[
u(x,y) \ge w(x,y)
\qquad \text{for all } (x,y)\in \convrc \pi(K).
\]
{To see this, first notice that if $(x,y)$ is part of the grid, then $u(x,y)=w(x,y)$ by definition of $w$. If $(x,y)$ lies on a grid segment in $\convrc K$, say $(x,y)=\lambda P_1+(1-\lambda)P_2$ where $P_1,\,P_2$ are grid points defining a segment parallel to one of the axes, then by separate concavity of $u$
\begin{align}\label{eq:sep_concavity}
u(x,y)\geq \lambda u(P_1)+(1-\lambda)u(P_2)=\lambda w(P_1)+(1-\lambda)w(P_2)=w(x,y).
\end{align}
Finally, if $(x,y)$ lies in the interior of one of the rectangles in the grid, we can write it as a convex combination of two points $P_3,\,P_4$ lying on the boundary of the rectangle and defining a segment parallel to the axes. We then have
\[
u(x,y)\geq \mu u(P_3)+(1-\mu)u(P_4)\geq \mu w(P_3)+(1-\mu)w(P_4)=w(x,y),
\]
where the second inequality is \eqref{eq:sep_concavity}.}

Hence $u=w$ on $\convrc \pi(K)$.
Thus the upper hull is determined by its values over the vertices of the planar grid. Once the points
\(
(P,u(P))
\)
are known at vertices $P$ of the grid of $\convrc\pi(K)$, the whole upper hull is obtained by rank-one lamination over each cell of the grid.

Analogously, we can describe the lower hull with a piecewise bilinear, separately convex function $\ell(x,y)$. Then
\[
\convrc K = \{ (x,y,z) \in \R^3 \colon (x,y)\in\convrc\pi(K),\; z-u(x,y)\leq 0,\; z-\ell(x,y)\geq 0 \},
\]
so in particular, it is semialgebraic.
In other words, we can write the triangular hull of $K$ as the union of finitely many semialgebraic sets, all of them defined by either quadratic (supported on grid cells) or linear polynomial equations and inequalities. The claim follows.
\end{proof}

We record a few consequences of the proof above. First, it is not true in
general that a rank-one convex function defined on a rank-one convex set
admits a rank-one convex extension to an arbitrary neighborhood of that set.
The extension procedure used here relies crucially on the fact that the
planar rank-one convex hull is obtained by a finite grid-pruning algorithm.
This combinatorial structure allows for a one-sided inductive construction
of separately concave extensions.
Moreover, the proof shows that once the highest and lowest points of
$\convrc K$ above each planar grid vertex are known, the whole hull is
obtained by standard rank-one laminations over the grid cells. In particular,
the rank-one extreme points, in the sense of
\cite[Definition~3.8]{Kirchheim03:RigidityGeometry}, are among these lifted
grid vertices.

As a consequence, we obtain a localization result for certain lower-dimensional subsets, showing that on suitable vertical strips associated with grid edges, triangular convexity reduces to classical convexity.
\begin{corollary}
    Let $K\subset\R^3$ be a finite set of triangular matrices. Fix $\Mmax$ such that $\convrc K \subset \R^2\times (-\Mmax,\Mmax)$. Let $P_1, P_2 \in \R^2$ belong to the grid associated to $\pi(K)$, such that the rank-one segment $[P_1,P_2]$ does not contain grid vertices in its interior. Then, for $\blacksquare = [P_1,P_2]\times (-\Mmax,\Mmax)$ we have
    \[
    \convrc K \cap \blacksquare = \conv ( \convrc K \cap\, \partial_{\rm rel} \blacksquare ).
    \]
\end{corollary}

The corollary is not an immediate consequence of the usual localization
theorem. Indeed, \cite[Theorem~4.7]{Kirchheim03:RigidityGeometry} localizes
the computation of rank-one convex, more generally directional convex, hulls
to full-dimensional sets. The full-dimensionality assumption is essential:
for example, intersecting the $T_4$ configuration from Example~\ref{ex:T4}
with a sufficiently long segment through the origin gives a counterexample
to such a statement for arbitrary lower-dimensional restrictions. The
corollary above is therefore a special lower-dimensional localization result,
valid for the vertical strips \(\blacksquare\) arising from grid edges.

\color{black}

\section{Directional convexity in three dimensions}\label{sec:directional_convexity}

A natural generalization of rank-one convexity is \emph{directional convexity} which substitutes the cone of rank-one matrices with any set of directions \cite{Matousek01:DirectionalConvexity}. We take this a step further and consider convexity with respect to an arbitrary cone $\mathcal{D}$, meaning that $\lambda A \in \mathcal{D}$ whenever $A \in \mathcal{D}$ and $\lambda \geq 0$.
\begin{definition}
    A function $f:\R^{n\times m} \to \R$ is said to be \emph{$\mathcal{D}$-convex} with respect to $\mathcal{D}\subset \R^{n\times m}$ if the restriction $t\mapsto f(A+tB)$ is a convex function for every $A\in\R^{n\times m}$ and for every $B\in\mathcal{D}$. 
    The \emph{$\mathcal{D}$-convex hull} of a compact set $K\subset\R^{n\times m}$ is the set
\begin{equation*}
    \convD K = \{A\in \R^{n\times m} \colon f(A) \leq \max f(K)\, \text{for all $\mathcal{D}$-convex } f \}.
\end{equation*}
\end{definition}
We can translate all the language that we developed in Section \ref{sec:background} to this setting.
Consider for instance a cone $\mathcal{D}\subset\R^2$ of finitely many directions in the plane. If there are only two directions, then we reduce to rank-one convexity of $2\times 2$ matrices. However, also in the case of more directions some of the results for triangular matrices can be adapted. This is true for Algorithm \ref{alg:plane_hull}. The procedure consists again in constructing the $\mathcal{D}$-grid associated to the finite set $K\subset\R^2$, which is a line arrangement of the cone $\mathcal{D}$ translated at every point of $K$. Then, one has to remove the redundant vertices of the $\mathcal{D}$-grid, and this step becomes a bit more subtle but still feasible. The analog of Theorem \ref{thm:plane} holds true (see \cite[Theorem 1.1]{FraMat09:DHullsPlane}) and therefore the $\mathcal{D}$-convex hull of finitely many points in $\R^2$ with respect to finitely many directions is semialgebraic. Figure \ref{fig:direct_hull_5pts} displays the $\mathcal{D}$-convex hull of the set $K$ from Example \ref{ex:plane_grid}, with respect to the cone $\mathcal{D} = \{xy(y-\frac{2}{3}x)=0\}\subset\R^2$.
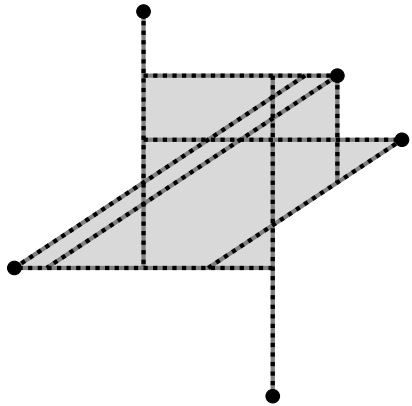
\begin{figure}[ht]
    \centering
    \begin{tikzpicture}[scale=0.85, outer sep=0]
    % Draw the main filled areas
    \fill[black!15!white] (1, -1) -- (1, -1/3) -- (3, 1) -- (2, 1) -- (2, 2) -- (-1, 2) -- (-1, 1/3) -- (-3, -1) -- cycle;

    % Draw solid lines
    \draw[black!50!white, ultra thick] (1, 2) -- (1, -3);
    \draw[black!50!white, ultra thick] (-1, 1) -- (3, 1);
    \draw[black!50!white, ultra thick] (-1, -1) -- (-1, 3);
    \draw[black!50!white, ultra thick] (1, -1) -- (-3, -1);
    \draw[black!50!white, ultra thick] (-1, 2) -- (2, 2);
    \draw[black!50!white, ultra thick] (2, 1/3) -- (2, 2);

    % New diagonal lines
    \draw[black!50!white, ultra thick] (0, -1) -- (3, 1);
    \draw[black!50!white, ultra thick] (1.5, 2) -- (-3, -1);
    \draw[black!50!white, ultra thick] (2, 2) -- (-2.5, -1);

    % Draw dotted lines
    \draw[black, dotted, ultra thick] (1, -1) -- (-3, -1);
    \draw[black, dotted, ultra thick] (-1, 1) -- (3, 1);
    \draw[black, dotted, ultra thick] (-1, 2) -- (2, 2);
    \draw[black, dotted, ultra thick] (-1, 3) -- (-1, -1);
    \draw[black, dotted, ultra thick] (1, 2) -- (1, -3);
    \draw[black, dotted, ultra thick] (2, 2) -- (2, 1/3);
    \draw[black, dotted, ultra thick] (2, 2) -- (-2.5, -1);
    \draw[black, dotted, ultra thick] (1.5, 2) -- (-3, -1);
    \draw[black, dotted, ultra thick] (0, -1) -- (3, 1);

    % Draw the points
    \filldraw[black] (3, 1) circle (3pt);
    \filldraw[black] (-1, 3) circle (3pt);
    \filldraw[black] (-3, -1) circle (3pt);
    \filldraw[black] (1, -3) circle (3pt);
    \filldraw[black] (2, 2) circle (3pt);

    % % Add the labels
    % \node at (3.9,1) {$\pi(K_1)$};
    % \node at (0.2,-3) {$\pi(K_2)$}; 
    % \node at (-3.9,-1) {$\pi(K_3)$}; 
    % \node at (-0.2,3) {$\pi(K_4)$}; 
    % \node at (2,2.4) {$\pi(K_5)$};
\end{tikzpicture}
    \caption{The $\mathcal{D}$-convex hull of five points in the plane, taken from Example \ref{ex:plane_grid}, with respect to the cone $\mathcal{D} = \{xy(y-\frac{2}{3}x)=0\}\subset\R^2$.}
    \label{fig:direct_hull_5pts}
\end{figure}
After these considerations and the results presented in Section \ref{sec:tri_matrices}, a natural question follows.
\begin{question}\label{q:more_directions}
    Let $\mathcal{D} = \mathcal{D}_2 \times\R \subset \R^3$ be a cone, with $\mathcal{D}_2\subset\R^2$ a cone of finitely many directions. Given a finite set of points $K\subset\R^3$, is the $\mathcal{D}$-convex hull of $K$ semialgebraic?
\end{question}
Theorem \ref{thm:semialg_triangular_hull} gives an affirmative answer to this question in the case $\mathcal{D}_2 = \{xy=0\}$. In the following, we make some considerations to support our belief that Question \ref{q:more_directions} is \emph{not} true in general. For our argumentation, we will focus on the case with three directions
\begin{equation}\label{eq:3directions_plane}
\mathcal{D}_2 = \{(a_1 x + b_1 y) (a_2 x + b_2 y) (a_3 x + b_3 y) = 0\}\subset\R^2, \quad a_i, b_i \in\R,\; i = 1,2,3 .
\end{equation}
We will consider again $\pi:\R^3\to \R^2$ to be the projection map onto the first two coordinates.

With three directions, the inclusion-minimal configuration (when the initial points are not connected to each other by lines in $\mathcal{D}$) is a $T_3$. However, a $T_3$ configurations in $3$-dimensional space lies necessarily in the plane spanned by the three points: the linear form of the plane is $\mathcal{D}$-concave and convex, so it allows us to remove both open half spaces from the directional convex hull. This consideration proves the following fact.
\begin{lemma}\label{lem:direc_T3}
    Let $K=\{K_1,K_2,K_3\}\subset\R^3$ be a set of points such that $\pi(K)$ is in a (possibly degenerate) $T_3$ configuration with respect to $\mathcal{D}_2$ as in \eqref{eq:3directions_plane}. Let $H$ be the plane spanned by the points $K_1,K_2,K_3$. Then 
    \[
    \convD K = ((\conv_{\mathcal{D}_2} \pi(K)) \times\R) \cap H.
    \]
\end{lemma}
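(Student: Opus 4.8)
The strategy is to prove the two inclusions separately, using the structure of $T_3$ configurations together with the compatibility of $\mathcal{D}$ with $\mathcal{D}_2$ under the projection $\pi$. First I would observe that, as already noted in the excerpt, a $T_3$ configuration on three points in $\R^3$ must lie in the affine plane $H$ spanned by $K_1, K_2, K_3$: indeed, all the directions $C_i$ of $\mathcal{D}$ used in the $T_3$ decomposition are, by construction, differences of the $K_j$ (up to scaling), so the entire configuration lives in $H$. Consequently, any single linear form $\ell$ cutting out $H$ is simultaneously $\mathcal{D}$-convex and $\mathcal{D}$-concave (being affine, its restriction to any line is affine, hence convex in both signs). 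Applying the characterization \eqref{eq:rchull} to $f = \ell$ and $f = -\ell$ gives $\convD K \subset \{\ell = 0\} = H$. Together with the obvious projection inclusion $\pi(\convD K) \subset \conv_{\mathcal{D}_2}\pi(K)$ — which holds because $\pi$ pushes $\mathcal{D}$-convex functions on $\R^3$ to $\mathcal{D}_2$-convex functions on $\R^2$ after composing with $\pi$, exactly as in the compatibility discussion for the triangular case — this yields
\[
\convD K \subset \bigl((\conv_{\mathcal{D}_2}\pi(K)) \times \R\bigr) \cap H.
\]

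For the reverse inclusion, I would argue that every point of the right-hand side is reached by a $\mathcal{D}$-laminate supported on $K$, using \eqref{eq:inf_laminate}: it suffices to show such a point lies in the zero set of $\cenv[\mathcal{D}] d_K^2$. Since $\pi$ restricted to $H$ is an affine isomorphism onto its image (assuming $H$ is not vertical; the vertical case is degenerate and can be handled directly, as the statement then collapses), the right-hand side is the affine image under $(\pi|_H)^{-1}$ of $\conv_{\mathcal{D}_2}\pi(K)$. Now $\pi(K)$ sits in a $T_3$ configuration with respect to $\mathcal{D}_2$, and by the planar theory (the analog of Theorem \ref{thm:plane} for finitely many directions, \cite[Theorem 1.1]{FraMat09:DHullsPlane}) the hull $\conv_{\mathcal{D}_2}\pi(K)$ is obtained by the lamination process: each of its points is a limit of centers of mass of finite-order $\mathcal{D}_2$-laminates supported on $\pi(K)$. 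Because $\pi$ is compatible with the cones and restricts to a linear isomorphism on $H$, any $\mathcal{D}_2$-splitting in the plane lifts uniquely to a $\mathcal{D}$-splitting inside $H$ with the same weights, and the lifted laminate is supported (in the limit) on $K$ itself. Hence every point of $\bigl((\conv_{\mathcal{D}_2}\pi(K))\times\R\bigr)\cap H$ is the barycenter of a $\mathcal{D}$-laminate supported on $K$, so it lies in $\convD K$.

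The main obstacle, and the point deserving the most care, is the lifting step: one must verify that a $\mathcal{D}_2$-laminate in the plane lifts to a genuine $\mathcal{D}$-laminate in $\R^3$ supported on (the closure of the orbit of) $K$ rather than merely on $H$. This is where the hypothesis $\mathcal{D} = \mathcal{D}_2 \times \R$ is used: the preimage $\pi^{-1}(\mathcal{D}_2)$ contains $\mathcal{D}$, but we need the lifted directions to actually lie in $\mathcal{D}$, and since $\pi|_H$ is a bijection onto $\R^2$ carrying rank-one directions of $\mathcal{D}_2$ to directions that are automatically in $\mathcal{D}$ (as $H$ is spanned by $\mathcal{D}$-directions), this works out — but it requires checking that no $\mathcal{D}_2$-direction in the plane lifts, via $(\pi|_H)^{-1}$, to a direction transverse to $\mathcal{D}$. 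One should also address the degenerate cases (points rank-one connected, or $H$ vertical) as in Proposition \ref{prop:T4}; these are routine but need a sentence. A cleaner alternative to the whole reverse inclusion is to exhibit directly, as in the proof of Proposition \ref{prop:T4}, the auxiliary points $Q_i \in H$ projecting to the $T_3$-vertices $P_i$ in the plane, and to split measures along the resulting segments — this is the approach I would ultimately write up, since it parallels the triangular case and makes the laminate construction explicit rather than appealing abstractly to planar lamination.
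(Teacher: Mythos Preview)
Your proof is correct and follows the same line as the paper's (very brief) argument: the inclusion $\convD K\subset H$ comes from the affine form $\ell$ being simultaneously $\mathcal{D}$-convex and $\mathcal{D}$-concave, the projection bound is the standard compatibility $\pi(\convD K)\subset\conv_{\mathcal{D}_2}\pi(K)$, and the reverse inclusion is obtained by lifting planar laminates through $(\pi|_H)^{-1}$. Two small remarks: your claim that ``the directions $C_i$ are differences of the $K_j$ up to scaling'' is false already for $N=3$ (compute $K_2-K_1$ from the definition), but it is also unnecessary, since three points trivially lie in their affine span and your linear-form argument is what actually does the work; and your worry about the lifting step is unwarranted, because $\mathcal{D}=\mathcal{D}_2\times\R=\pi^{-1}(\mathcal{D}_2)$ means \emph{any} vector in $H$ projecting into $\mathcal{D}_2$ is automatically in $\mathcal{D}$, with nothing further to check.
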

Based on the results for triangular matrices, since also in this directional convexity context the planar grid algorithm works, one would hope to use that with now triangles as building blocks instead of rectangles. If the $\mathcal{D}$-grid associated to $K$ would subdivide $\conv_{\mathcal{D}_2} \pi(K)$ into triangles, with edge directions contained in $\mathcal{D}_2$, then it would be possible to repeat the arguments in Theorem \ref{thm:semialg_triangular_hull} with triangles and planes instead of rectangles and quadrics, to conclude that the $\mathcal{D}$-convex hull in this case is semialgebraic. 
However, as Figure \ref{fig:direct_hull_5pts} shows, there are many open two-dimensional cells of the grid that are not triangles. This is a main difference between having only two or more directions in the plane: with two directions, at every grid point the cone $\mathcal{D}$ translated at that point is already part of the grid; with more directions, this is not true generically. 

In order to avoid this issue, one could try and refine the grid associated to $K$ an construct a \emph{grid compatible with $K$ and $\mathcal{D}_2$} via an inductive procedure: at every step, add to the line arrangement all the translations of $\mathcal{D}_2$ to the newly found grid points (only those contained in $\conv_{\mathcal{D}_2}K$). If such a procedure terminates in finitely many steps, we call the grid itself finite. In that case, one could attempt to adapt the techniques from Theorem \ref{thm:semialg_triangular_hull}: 
\begin{enumerate}
    \item[(i)] each vertex of the grid has a highest and lowest point in $\R^3$ which project onto it;
    \item[(ii)]\label{step:triangles} for each grid segment and triangle in the plane, construct the respective segment or triangle in the upper/lower hull using Lemma \ref{lem:direc_T3};
    \item[(iii)] prove that, up to vertical lamination, the union of segments and triangles from step (ii) (plus additional isolated vertical lines) is the $\mathcal{D}$-convex hull of $K$.
\end{enumerate}
This raises the natural question: does any tuple $(K, \mathcal{D}_2)$ admit a compatible finite grid? Or more in general, can the $\mathcal{D}_2$-convex hull of $K$ be triangulated using only triangles with edge directions belonging to $\mathcal{D}_2$? Unfortunately, the answer to this question is negative, as the following example shows. 
\begin{example}\label{ex:2d_infinite}
    Let $K = \{ K_1, K_2, K_3, K_4\} \subset\R^2$ be the vertices of a parallelogram $P$ with acute angle $\frac{\pi}{3}$ and edge lengths $a, b$ with $\frac{a}{b}\not\in\mathbb{Q}$. Consider the cone 
    \begin{equation}\label{eq:D2_equilateral}
    \mathcal{D}_2 = \{ y (y - \tan \tfrac{\pi}{3} x) (y + \tan \tfrac{\pi}{3} x) =0\}.
    \end{equation}
    Assume by contradiction that the $\mathcal{D}_2$-convex hull of $K$, namely $P$, admits a triangulation with finitely many triangles with edge directions in $\mathcal{D}_2$, and therefore equilateral. By \cite[Corollary 7.6]{KLLLT23:TilingOfRectangles}, this would imply that there exists a scaling $\lambda <\infty$ such that all the triangles in the scaled triangulation of $\lambda P$ have integer edge length. Therefore, $\lambda a, \lambda b \in \mathbb{Z}$, which contradicts the hypothesis $\frac{a}{b}\not\in\mathbb{Q}$. 
\end{example}

\begin{remark}
    As pointed out in Section \ref{subsec:calculus_variations}, the cone of rank-one matrices is the wave cone of the curl operator. However, other differential operators have other wave cones, providing a motivation for directional convexity, and for Example \ref{ex:2d_infinite}. For instance, the cone $\mathcal D_2$ in \eqref{eq:D2_equilateral} is the wave cone of the operator $\mathcal A$ defined on $v\colon \R^3\to \R^{3}$, given by
    $$
    \mathcal Av=(\partial_2 v_1,(\partial_2-\tan\tfrac{\pi}{3}\partial_1) v_2,(\partial_2+\tan\tfrac{\pi}{3}\partial_1) v_3).
    $$
\end{remark}

The infinite nature of the planar compatible grid in Example \ref{ex:2d_infinite} makes the approach of Theorem \ref{thm:semialg_triangular_hull} fail, as that would lead to a $\mathcal{D}$-convex hull with infinitely many planes intersecting its boundary. The missing detail to disprove a version of Conjecture \ref{conj:main} for directional convexity is that some of these planes could a priori coincide (also in the rank-one convexity case this happens for some quadrics). Therefore, one would have to prove that, possibly after small perturbation of $K$, only finitely many such planes can coincide, and therefore $\convD K$ cannot be semialgebraic. We leave this for further investigation, and state it as an open question.
\begin{question}
    Let $K\subset\R^3$ project onto the points in Example \ref{ex:2d_infinite}, and let $\mathcal{D} = \mathcal{D}_2 \times \R$ with $\mathcal{D}_2$ as in \eqref{eq:D2_equilateral}. Is it true that, up to small perturbations of $K$ in the vertical direction, $\convD K$ is not semialgebraic, and therefore contradicts Conjecture \ref{conj:main}?
\end{question}

\section{Open questions}\label{sec:open_questions}
We collect in this final section more questions related to Conjecture \ref{conj:main} for rank-one convexity and to the methods we used to prove it in the triangular case at hand.

Since the Carath\'eodory number of triangular convexity is infinite, we consider then a different way to construct the rank-one convex hull via a \emph{finite} procedure.
The following question is inspired by the $5$-point configuration from Theorem \ref{thm:rch5pts}, as well as by \cite{AngGar024:2+1hull}. Consider the set obtained from $K$ by consecutively computing $T_4$ configurations and laminations. Recall that the \emph{$T_4$-convex hull} of $K$ is
\begin{align*}
    K^{(0)} &= K, \\
    K^{(i+1)} &= \{ T_4 \hbox{ of points of } K^{(i)} \} \cup \{ \hbox{rank-one segments with extrema in } K^{(i)} \}, \\
    \conv_{T_4} K &= \cup_{i\geq 0} K^{(i)}.
\end{align*}
In the plane, by Corollary \ref{cor:convT4_plane}, we have that $\convrc K = \conv_{T_4} K$, and the same holds for $K$ as in Theorem \ref{thm:rch5pts}. 
\begin{question}\label{q:T4hull}
    ~
    \begin{enumerate}
        \item Let $K\subset \R^3$ be a finite set of $2\times 2$ triangular matrices. Does $\convrc K = \conv_{T_4} K$ always hold? 
        \item Is $\conv_{T_4} K$ a finite procedure? Namely, for a finite set $K$, is $\conv_{T_4} K = \cup_{i=0}^N K^{(i)}$ for some $N>0$, with $K{(i)}$ defined as above?
    \end{enumerate}
\end{question}
By \cite[Theorem 1]{Szekelyhidi05:rch2x2}, $\conv_{T_4} K$ is nontrivial if and only if $\convrc K$ is nontrivial, therefore the $T_4$ convex hull is a nontrivial inner approximation of the triangular hull. A positive answer to Question \ref{q:T4hull} would provide an algorithm for the computation of triangular hulls of finitely many points.
\begin{remark}
    Question \ref{q:T4hull} holds true for diagonal $2\times 2$ matrices (Corollary \ref{cor:convT4_plane}), but it cannot be generalized much further. For $2\times 2$ symmetric matrices, for which \cite[Theorem 1]{Szekelyhidi05:rch2x2} still holds, the $5$-points configuration in \cite{Pompe10:5gradProblem} provides a negative answer to Question \ref{q:T4hull}.
\end{remark}

A negative answer to the next question would provide a different proof to Theorem \ref{thm:semialg_triangular_hull} with the explicit construction of the upper/lower rank-one functions that cut out the triangular hull vertically. We state the question for the upper hull, but the same can be asked for the lower hull.
\begin{question}\label{q:sign_quadric}
    Let $K\subset \R^3$ be a finite set of $2\times 2$ triangular matrices. Consider the associated planar grid, a two-dimensional cell $Q$ of the grid, and let $Q_1, Q_2, Q_3, Q_4$ be the points in $\convrc K$ with largest $z$ coordinate that project onto the vertices of $Q$. Let $q$ be the quadric that describes the (degenerate) $T_4$ configuration of $\{Q_1, Q_2, Q_3, Q_4\}$. Can it happen that in each of the four quadrants identified by $Q$, the quadric $q$ has strictly positive sign on at least one point of $K$ in the quadrant? 
\end{question}
If the answer to Question \ref{q:sign_quadric} were negative, then one could construct the upper hull of the triangular hull in a similar way as for $f_u$ in the proof of Theorem \ref{thm:rch5pts}, by gluing appropriate quadrics.
This property could also be used as stopping criterion for a tentative algorithm, provided Question \ref{q:T4hull} were true, or as a tool to prove Question \ref{q:T4hull}.

We have seen the interpretation of the rank-one convex hull in terms of laminates of finite order. We introduce the notion of \emph{consecutive} laminates. A sequence $\{\nu_i\}_i$ of laminates of finite order is called consecutive if $\nu_{i+1}$ is obtained from $\nu_i$ via one elementary splitting. 
\begin{question}
    Let $K\subset\R^{n\times m}$ be a finite set. Is
    \[
     \convrc K = \{A\in \R^{n\times m}\colon \inf \{\langle \nu , d_K^2 \rangle\colon\nu \hbox{ consecutive laminates, } \overline{\nu} = A\} = 0\} ?
    \]
\end{question}
This restriction would allow some useful geometric reasoning when thinking in terms of consecutive splittings in rank-one directions. The question itself can also be specialized to $n=m=2$, or generalized to any compact set $K$.

\printbibliography

@article{AngGar024:2+1hull,
  AUTHOR = {Pablo Angulo and Carlos García-Gutiérrez},
   TITLE = {The 2+1-convex hull of a finite set},
 JOURNAL = {Adv. Calc. Var.},
FJOURNAL = {Advances in Calculus of Variations},
    YEAR = {2024},
     DOI = {doi:10.1515/acv-2023-0077},
}

@article {FraMat09:DHullsPlane,
    AUTHOR = {Fran\v{e}k, Vojt\v{e}ch and Matou\v{s}ek, Ji\v{r}\'i},
     TITLE = {Computing {$D$}-convex hulls in the plane},
   JOURNAL = {Comput. Geom.},
  FJOURNAL = {Computational Geometry. Theory and Applications},
    VOLUME = {42},
      YEAR = {2009},
    NUMBER = {1},
     PAGES = {81--89},
   MRCLASS = {52A10 (68U05)},
       DOI = {10.1016/j.comgeo.2008.03.003},
}

@book{Kirchheim03:RigidityGeometry,
     AUTHOR = {Kirchheim, Bernd},
      TITLE = {Rigidity and Geometry of Microstructures},
      NOTE = {Habilitation Thesis},
       YEAR = {2003},
  PUBLISHER = {Max Planck Institute for Mathematics in the Sciences},
    ADDRESS = {Leipzig},
       URL = {https://www.mis.mpg.de/publications/preprint-repository/lecture_note/2003/issue-16},
}

@article {Kolar03:NonCompactHulls,
    AUTHOR = {Kol\'a\v r, Jan},
     TITLE = {Non-compact lamination convex hulls},
   JOURNAL = {Ann. Inst. H. Poincar\'e{} C Anal. Non Lin\'eaire},
  FJOURNAL = {Annales de l'Institut Henri Poincar\'e{} C. Analyse Non
              Lin\'eaire},
    VOLUME = {20},
      YEAR = {2003},
    NUMBER = {3},
     PAGES = {391--403},
   MRCLASS = {49J45 (52A30)},
       DOI = {10.1016/S0294-1449(02)00007-0},
}

@article {Matousek01:DirectionalConvexity,
    AUTHOR = {Matou\v{s}ek, Ji\v{r}\'i},
     TITLE = {On directional convexity},
   JOURNAL = {Discrete Comput. Geom.},
  FJOURNAL = {Discrete \& Computational Geometry. An International Journal
              of Mathematics and Computer Science},
    VOLUME = {25},
      YEAR = {2001},
    NUMBER = {3},
     PAGES = {389--403},
   MRCLASS = {52A20},
       DOI = {10.1007/s004540010069},
}

@article {MatPle98:SeparateCH,
    AUTHOR = {Matou\v{s}ek, Ji\v{r}\'i and Plech\'a\v{c}, Petr},
     TITLE = {On functional separately convex hulls},
   JOURNAL = {Discrete Comput. Geom.},
  FJOURNAL = {Discrete \& Computational Geometry. An International Journal
              of Mathematics and Computer Science},
    VOLUME = {19},
      YEAR = {1998},
    NUMBER = {1},
     PAGES = {105--130},
   MRCLASS = {52A01 (65Y25)},
       DOI = {10.1007/PL00009331},
}

@article {MulSve03:CounterexRegularity,
    AUTHOR = {M\"uller, Stefan and \v{S}ver\'ak, Vladim\'ir},
     TITLE = {Convex integration for {L}ipschitz mappings and
              counterexamples to regularity},
   JOURNAL = {Ann. of Math. (2)},
  FJOURNAL = {Annals of Mathematics. Second Series},
    VOLUME = {157},
      YEAR = {2003},
    NUMBER = {3},
     PAGES = {715--742},
   MRCLASS = {35D10 (35J45 35J50 49J10 49N60)},
       DOI = {10.4007/annals.2003.157.715},
}

@article {Pompe10:5gradProblem,
    AUTHOR = {Pompe, Waldemar},
     TITLE = {The quasiconvex hull for the five-gradient problem},
   JOURNAL = {Calc. Var. Partial Differential Equations},
  FJOURNAL = {Calculus of Variations and Partial Differential Equations},
    VOLUME = {37},
      YEAR = {2010},
    NUMBER = {3-4},
     PAGES = {461--473},
   MRCLASS = {26B25 (49K21 74B20)},
       DOI = {10.1007/s00526-009-0272-z},
}

@article {Sverak90:ExamplesrcFunct,
    AUTHOR = {\v{S}ver\'ak, Vladim\'ir},
     TITLE = {Examples of rank-one convex functions},
   JOURNAL = {Proc. Roy. Soc. Edinburgh Sect. A},
  FJOURNAL = {Proceedings of the Royal Society of Edinburgh. Section A.
              Mathematics},
    VOLUME = {114},
      YEAR = {1990},
    NUMBER = {3-4},
     PAGES = {237--242},
   MRCLASS = {26B25},
       DOI = {10.1017/S0308210500024410},
}

@article {Szekelyhidi05:rch2x2,
    AUTHOR = {Sz\'{e}kelyhidi, Jr., L\'aszl\'o},
     TITLE = {Rank-one convex hulls in $\mathbb{R}^{2\times 2}$},
   JOURNAL = {Calc. Var. Partial Differential Equations},
  FJOURNAL = {Calculus of Variations and Partial Differential Equations},
    VOLUME = {22},
      YEAR = {2005},
    NUMBER = {3},
     PAGES = {253--281},
   MRCLASS = {49J45 (52A30 74G65 74N15)},
       DOI = {10.1007/s00526-004-0272-y},
}

@article {KLLLT23:TilingOfRectangles,
    AUTHOR = {Keleti, Tam\'as and Lacina, Stephen and Liu, Changshuo and Liu, Mengzhen and Tuir\'an Rangel, Jos\'e{} Ram\'on},
     TITLE = {Tiling of rectangles with squares and related problems via
              {D}iophantine approximation},
   JOURNAL = {Discrete Math.},
  FJOURNAL = {Discrete Mathematics},
    VOLUME = {346},
      YEAR = {2023},
    NUMBER = {9},
     PAGES = {113442},
   MRCLASS = {05B45 (52C20)},
       DOI = {10.1016/j.disc.2023.113442},
}

@article {Morrey52:QuasiConvexity,
    AUTHOR = {Morrey, Jr., Charles B.},
     TITLE = {Quasi-convexity and the lower semicontinuity of multiple
              integrals},
   JOURNAL = {Pacific J. Math.},
  FJOURNAL = {Pacific Journal of Mathematics},
    VOLUME = {2},
      YEAR = {1952},
     PAGES = {25--53},
   MRCLASS = {49.0X},
       URL = {http://projecteuclid.org/euclid.pjm/1103051941},
}

@article {MulSve99:ConvexIntegration,
    AUTHOR = {M\"uller, Stefan and \v Sver\'ak, Vladim\'ir},
     TITLE = {Convex integration with constraints and applications to phase
              transitions and partial differential equations},
   JOURNAL = {J. Eur. Math. Soc. (JEMS)},
  FJOURNAL = {Journal of the European Mathematical Society (JEMS)},
    VOLUME = {1},
      YEAR = {1999},
    NUMBER = {4},
     PAGES = {393--422},
   MRCLASS = {35J60 (35D10 35J20 49J10 74N05)},
       DOI = {10.1007/s100970050012},
}

@article {Muller99:DiagMatrices,
    AUTHOR = {M\"uller, Stefan},
     TITLE = {Rank-one convexity implies quasiconvexity on diagonal
              matrices},
   JOURNAL = {Internat. Math. Res. Notices},
  FJOURNAL = {International Mathematics Research Notices},
      YEAR = {1999},
    NUMBER = {20},
     PAGES = {1087--1095},
   MRCLASS = {49J45 (42C40 49J10)},
       DOI = {10.1155/S1073792899000598},
}

@incollection {Tartar93:SeparatelyConvex,
    AUTHOR = {Tartar, Luc},
     TITLE = {Some remarks on separately convex functions},
 BOOKTITLE = {Microstructure and phase transition},
    SERIES = {IMA Vol. Math. Appl.},
    VOLUME = {54},
     PAGES = {191--204},
 PUBLISHER = {Springer, New York},
      YEAR = {1993},
   MRCLASS = {49N60 (49J10 73C50 73V25)},
       DOI = {10.1007/978-1-4613-8360-4_12},
}

@article {KreZim06:GeomRankOneHulls,
    AUTHOR = {Kreiner, Carl-Friedrich and Zimmer, Johannes},
     TITLE = {Topology and geometry of nontrivial rank-one convex hulls for
              two-by-two matrices},
   JOURNAL = {ESAIM Control Optim. Calc. Var.},
  FJOURNAL = {ESAIM. Control, Optimisation and Calculus of Variations},
    VOLUME = {12},
      YEAR = {2006},
    NUMBER = {2},
     PAGES = {253--270},
   MRCLASS = {49J45 (52A30)},
       DOI = {10.1051/cocv:2005036},
}

@article{KP94,
    AUTHOR = {Kinderlehrer, David and Pedregal, Pablo},
     TITLE = {Gradient {Y}oung measures generated by sequences in {S}obolev
              spaces},
   JOURNAL = {J. Geom. Anal.},
  FJOURNAL = {The Journal of Geometric Analysis},
    VOLUME = {4},
      YEAR = {1994},
    NUMBER = {1},
     PAGES = {59--90},
   MRCLASS = {49Q10 (35J20 46E99 49J45 73C50 73V25)},
       DOI = {10.1007/BF02921593},
}

@article{HKL19,
    AUTHOR = {Harris, Terence L. J. and Kirchheim, Bernd and Lin, Chun-Chi},
     TITLE = {Two-by-two upper triangular matrices and {M}orrey's
              conjecture},
   JOURNAL = {Calc. Var. Partial Differential Equations},
  FJOURNAL = {Calculus of Variations and Partial Differential Equations},
    VOLUME = {57},
      YEAR = {2018},
    NUMBER = {3},
     PAGES = {Paper No. 73, 12},
   MRCLASS = {49J45 (15B99)},
       DOI = {10.1007/s00526-018-1360-8},
}

@article{Kr99,
    AUTHOR = {Kristensen, Jan},
     TITLE = {On the non-locality of quasiconvexity},
   JOURNAL = {Ann. Inst. H. Poincar\'e{} C Anal. Non Lin\'eaire},
  FJOURNAL = {Annales de l'Institut Henri Poincar\'e{} C. Analyse Non
              Lin\'eaire},
    VOLUME = {16},
      YEAR = {1999},
    NUMBER = {1},
     PAGES = {1--13},
   MRCLASS = {49J10 (49J45 90C26)},
       DOI = {10.1016/S0294-1449(99)80006-7},
}

@article{sverak,
    AUTHOR = {\v Sver\'ak, Vladim\'ir},
     TITLE = {Rank-one convexity does not imply quasiconvexity},
   JOURNAL = {Proc. Roy. Soc. Edinburgh Sect. A},
  FJOURNAL = {Proceedings of the Royal Society of Edinburgh. Section A.
              Mathematics},
    VOLUME = {120},
      YEAR = {1992},
    NUMBER = {1-2},
     PAGES = {185--189},
   MRCLASS = {49K10 (49K27 90C48)},
       DOI = {10.1017/S0308210500015080},
}

@incollection {MS96,
    AUTHOR = {M\"uller, Stefan and \v Sver\'ak, Vladimir},
     TITLE = {Attainment results for the two-well problem by convex
              integration},
 BOOKTITLE = {Geometric analysis and the calculus of variations},
     PAGES = {239--251},
 PUBLISHER = {Int. Press, Cambridge, MA},
      YEAR = {1996},
   MRCLASS = {49J10 (58C35 73B30 73V25)},
}

@incollection{KMS,
    AUTHOR = {Kirchheim, Bernd and M\"uller, Stefan and \v Sver\'ak,
              Vladim\'ir},
     TITLE = {Studying nonlinear pde by geometry in matrix space},
 BOOKTITLE = {Geometric analysis and nonlinear partial differential
              equations},
     PAGES = {347--395},
 PUBLISHER = {Springer, Berlin},
      YEAR = {2003},
   MRCLASS = {35J60 (35K55 49Q20)},
       DOI = {10.1007/978-3-642-55627-2_19},
}

@article{DLSz,
    AUTHOR = {De Lellis, Camillo and Sz\'ekelyhidi, Jr., L\'aszl\'o},
     TITLE = {The {E}uler equations as a differential inclusion},
   JOURNAL = {Ann. of Math. (2)},
  FJOURNAL = {Annals of Mathematics. Second Series},
    VOLUME = {170},
      YEAR = {2009},
    NUMBER = {3},
     PAGES = {1417--1436},
   MRCLASS = {35Q31 (34A60 35D30 76B03)},
       DOI = {10.4007/annals.2009.170.1417},
}

@article{BDLIS,
    AUTHOR = {Buckmaster, Tristan and De Lellis, Camillo and Isett, Philip
              and Sz\'ekelyhidi, Jr., L\'aszl\'o},
     TITLE = {Anomalous dissipation for {$1/5$}-{H}\"older {E}uler flows},
   JOURNAL = {Ann. of Math. (2)},
  FJOURNAL = {Annals of Mathematics. Second Series},
    VOLUME = {182},
      YEAR = {2015},
    NUMBER = {1},
     PAGES = {127--172},
   MRCLASS = {35Q31 (35B65)},
       DOI = {10.4007/annals.2015.182.1.3},
}

@book{Mo66,
    AUTHOR = {Morrey, Jr., Charles B.},
     TITLE = {Multiple integrals in the calculus of variations},
    SERIES = {Die Grundlehren der mathematischen Wissenschaften},
    VOLUME = {Band 130},
 PUBLISHER = {Springer-Verlag New York, Inc., New York},
      YEAR = {1966},
     PAGES = {ix+506},
   MRCLASS = {49.00 (00.00)},
       DOI = {10.1007/978-3-540-69952-1},
}

@book{Da,
    AUTHOR = {Dacorogna, Bernard},
     TITLE = {Direct methods in the calculus of variations},
    SERIES = {Applied Mathematical Sciences},
    VOLUME = {78},
   EDITION = {Second},
 PUBLISHER = {Springer, New York},
      YEAR = {2008},
     PAGES = {xii+619},
   MRCLASS = {49-02 (49J10 49J45 74B20)},
       DOI = {10.1007/978-0-387-55249-1},
}

@article{Ta05,
    AUTHOR = {Tartar, Luc},
     TITLE = {Compensation effects in partial differential equations},
   JOURNAL = {Rend. Accad. Naz. Sci. XL Mem. Mat. Appl. (5)},
  FJOURNAL = {Accademia Nazionale delle Scienze detta dei XL. Rendiconti.
              Serie V. Memorie di Matematica e Applicazioni. Parte I},
    VOLUME = {29},
      YEAR = {2005},
     PAGES = {395--453},
   MRCLASS = {35L70 (01A60 35A15 49J45)},
       URL = {https://media.accademiaxl.it/memorie/S5-VXXIX-P1-2-2005/Tartar395-453.pdf},
}

@article{SoTi,
    AUTHOR = {Sorella, Massimo and Tione, Riccardo},
     TITLE = {The four-state problem and convex integration for linear
              differential operators},
   JOURNAL = {J. Funct. Anal.},
  FJOURNAL = {Journal of Functional Analysis},
    VOLUME = {284},
      YEAR = {2023},
    NUMBER = {4},
     PAGES = {Paper No. 109785, 44},
   MRCLASS = {35B99 (35E20 35G35)},
       DOI = {10.1016/j.jfa.2022.109785},
}

@incollection {BKMN22:SecureTwoParty,
    AUTHOR = {Basu, Saugata and Khorasgani, Hamidreza Amini and Maji,
              Hemanta K. and Nguyen, Hai H.},
     TITLE = {Geometry of secure two-party computation},
 BOOKTITLE = {2022 {IEEE} 63rd {A}nnual {S}ymposium on {F}oundations of
              {C}omputer {S}cience---{FOCS} 2022},
     PAGES = {1035--1044},
 PUBLISHER = {IEEE Computer Soc., Los Alamitos, CA},
      YEAR = {2022},
   MRCLASS = {68Q11},
       DOI = {10.1109/FOCS54457.2022.00101},
}

@inproceedings{Ta79,
  AUTHOR={Tartar, Luc},
  TITLE={Compensated compactness and applications to partial differential equations},
  BOOKTITLE={Nonlinear analysis and mechanics: {H}eriot-{W}att {S}ymposium,
              {V}ol. {IV}},
  SERIES = {Res. Notes in Math.},
  VOLUME={39},
  PAGES={136--212},
PUBLISHER = {Pitman, Boston, Mass.-London},
  YEAR={1979},
   MRCLASS = {35A35 (35Q99 49A50 49D50)}
}

@incollection{Ta83,
    AUTHOR = {Tartar, Luc},
     TITLE = {The compensated compactness method applied to systems of
              conservation laws},
 BOOKTITLE = {Systems of nonlinear partial differential equations ({O}xford,
              1982)},
    SERIES = {NATO Adv. Sci. Inst. Ser. C: Math. Phys. Sci.},
    VOLUME = {111},
     PAGES = {263--285},
 PUBLISHER = {Reidel, Dordrecht},
      YEAR = {1983},
   MRCLASS = {35L65 (35Q20)},
       DOI = {10.1007/978-94-009-7189-9},
}
~\\

\small
\noindent \textsc{Chiara Meroni} \\
\textsc{ETH Institute for Theoretical Studies, Z\"urich, Switzerland} \\
\url{chiara.meroni@eth-its.ethz.ch} \\

\noindent \textsc{Bogdan~Rai\cb{t}\u{a}} \\
\textsc{Department of Mathematics and Statistics, Georgetown University,\\ Washington, DC, United States of America} \\
\url{br607@georgetown.edu} \\

\end{document}